%% file: main.tex
\documentclass[preprint,11pt,authoryear]{elsarticle}

\usepackage{amsmath,amsfonts}
\usepackage{amsthm}
\usepackage{bm}

\newtheorem{proposition}{Proposition}

\newtheorem{remark}{Remark}

\usepackage{siunitx}
\usepackage{booktabs}
\usepackage{makecell}
\usepackage{multirow}

\usepackage{enumitem}
\usepackage{mathtools}
\usepackage{algorithmic}
\usepackage{algorithm}
\usepackage{array}
\usepackage[caption=false,font=footnotesize]{subfig}
\usepackage{textcomp}

\usepackage{stfloats}
\usepackage{url}
\usepackage{verbatim}
\usepackage{graphicx}
\DeclareMathOperator*{\argmin}{arg\,min}

\graphicspath{{Figure/}}
\usepackage[a4paper,includehead,includefoot,
            top=20mm,bottom=24mm,left=18mm,right=18mm,
            headsep=8mm,footskip=12mm]{geometry}

\usepackage[dvipsnames]{xcolor}
\usepackage[colorlinks,linkcolor=blue,citecolor=ForestGreen]{hyperref}

\usepackage{soul}

\newcommand{\DA}{\mathrm{DA}}
\newcommand{\RT}{\mathrm{RT}}

\newcommand{\ES}{\mathrm{ES}}
\newcommand{\PV}{\mathrm{PV}}

\usepackage{accents}
\newcommand{\ubar}[1]{\underaccent{\bar}{#1}}
\newcommand{\utilde}[1]{\underaccent{\tilde}{#1}}

\usepackage[normalem]{ulem}

\allowdisplaybreaks % break align environment

\usepackage{amssymb} % for varnothing

\journal{European Journal of Operational Research}

\begin{document}

\begin{frontmatter}

%% Title, authors and addresses

%% use the tnoteref command within \title for footnotes;
%% use the tnotetext command for theassociated footnote;
%% use the fnref command within \author or \affiliation for footnotes;
%% use the fntext command for theassociated footnote;
%% use the corref command within \author for corresponding author footnotes;
%% use the cortext command for theassociated footnote;
%% use the ead command for the email address,
%% and the form \ead[url] for the home page:
%% \title{Title\tnoteref{label1}}
%% \tnotetext[label1]{}
%% \author{Name\corref{cor1}\fnref{label2}}
%% \ead{email address}
%% \ead[url]{home page}
%% \fntext[label2]{}
%% \cortext[cor1]{}
%% \affiliation{organization={},
%%            addressline={}, 
%%            city={},
%%            postcode={}, 
%%            state={},
%%            country={}}
%% \fntext[label3]{}

\title{Day-Ahead Offering for Virtual Power Plants: A Stochastic Linear Programming Reformulation and Projected Subgradient Method} %% Article title

%% use optional labels to link authors explicitly to addresses:
%% \author[label1,label2]{}
%% \affiliation[label1]{organization={},
%%             addressline={},
%%             city={},
%%             postcode={},
%%             state={},
%%             country={}}
%%
%% \affiliation[label2]{organization={},
%%             addressline={},
%%             city={},
%%             postcode={},
%%             state={},
%%             country={}}

% \author{Weiqi Meng, Hongyi Li, and Bai Cui} %% Author name

% %% Author affiliation
% \affiliation{organization={Department of Electrical
% and Computer Engineering, Iowa State University},%Department and Organization
%             % addressline={2215 Coover Hall 2520 Osborn Drive}, 
%             city={Ames},
%             postcode={50010}, 
%             state={Iowa},
%             country={USA}}

\author{Weiqi Meng}
\ead{mengwq@iastate.edu}

\author{Hongyi Li}
\ead{hongyili@iastate.edu}

\author{Bai Cui\corref{cor1}}
\ead{baicui@iastate.edu}

\cortext[cor1]{Corresponding author}

\affiliation{organization={Department of Electrical and Computer Engineering, Iowa State University},
            city={Ames},
            postcode={50010},
            state={Iowa},
            country={USA}}

%% Abstract
\begin{abstract}
Virtual power plants (VPPs) are an emerging paradigm that aggregates distributed energy resources (DERs) for coordinated participation in power systems, including bidding as a single dispatchable entity in the wholesale market. In this paper, we address a critical operational challenge for VPPs: the day-ahead offering problem under highly intermittent and uncertain DER outputs and market prices. The day-ahead offering problem determines the price-quantity pairs submitted by VPPs while balancing profit opportunities against operational uncertainties.
First, we formulate the problem as a scenario-based two-stage stochastic adaptive robust optimization problem, where the uncertainty of the locational marginal prices follows a Markov process and DER uncertainty is characterized by static uncertainty sets. Then, motivated by the outer approximation principle of the column-and-constraint generation (CC\&G) algorithm, we propose a novel inner approximation-based projected subgradient method. By exploiting the problem structure, we propose two novel approaches to improve computational tractability. First, we show that under mild modeling assumptions, the robust second-stage problem can be equivalently reformulated as a linear program (LP) with a nested resource allocation structure that is amenable to an efficient greedy algorithm. Furthermore, motivated by the computational efficiency of solving the reformulated primal second-stage problem and the isotonic structure of the first-stage feasible region, we propose an efficient projected subgradient algorithm to solve the overall stochastic LP problem. Extensive computational experiments using real-world data demonstrate that the overall projected subgradient descent method achieves about two orders of magnitude speedup over CC\&G while maintaining solution quality. 
\end{abstract}

%%Graphical abstract
% \begin{graphicalabstract}
% %\includegraphics{grabs}

% \centering
% \includegraphics[width=4.2 in]{Figure1.1.pdf}

% \end{graphicalabstract}

%%Research highlights
% \begin{highlights}
% \item A two-stage stochastic LP day-ahead offering counterpart model is presented
% \item Tractable second-stage problem is derived, and solved by a greedy algorithm
% \item Offering problem is solved by inner approximation-based projected subgradient algorithm
% \item Our method achieves two orders of magnitude total speedup over state-of-the-art algorithm
% % , via reformulation $(2\times)$ and algorithmic design $(70\times)$

% \item Real-world data is used for numerical studies with various benchmarks

% \end{highlights}

%% Keywords
\begin{keyword}
%% keywords here, in the form: keyword \sep keyword

%% PACS codes here, in the form: \PACS code \sep code

%% MSC codes here, in the form: \MSC code \sep code
%% or \MSC[2008] code \sep code (2000 is the default)
Offering strategy \sep two-stage adaptive robust optimization \sep stochastic programming \sep virtual power plant \sep greedy algorithm
\end{keyword}

\end{frontmatter}

%% Add \usepackage{lineno} before \begin{document} and uncomment 
%% following line to enable line numbers
%% \linenumbers

%% main text
%%

%% Use \section commands to start a section
\section{Introduction}
\subsection{Background and Motivation}
\label{Background and Motivation}
%% Labels are used to cross-reference an item using \ref command.

Distributed energy resources (DERs) are reshaping electric power systems by enabling localized rooftop photovoltaics (PVs), behind-the-meter energy storage units, and demand flexibility at the distribution level \cite{iea_unlocking_2022}. Virtual power plants (VPPs) aggregate DERs through coordination platforms to provide grid services comparable to conventional generators \cite{kim_operations_2022}. The aggregation transforms dispersed residential and commercial assets into wholesale market participants \cite{us_department_of_energy_pathways_2025}.

In U.S. electricity markets, utility-based DER aggregators submit stepwise offering curves ahead of the operation day \cite{lin_electricity_2017}. The Independent System Operator (ISO) clears the market using aggregated bids and offers, determining the locational marginal prices (LMPs) and accepted quantities for each participant \cite{plazas_multimarket_2005}. In vertically integrated or non-ISO regions, analogous mechanisms are used to determine schedules and settlement prices. Once settled, aggregators need to commit the offer as the uncertainty of all DERs is realized during the operation day. Hence, VPPs should develop strategic offers to maximize their profits. This day-ahead market structure requires a two-stage adaptive decision-making framework, where the day-ahead offering decisions are made here-and-now, and the second-stage operational decisions are made in real-time to respond dynamically to updated uncertainty information \cite{sun_robust_2021}. 
% This two-stage adaptive decision-making model is challenging primarily due to the curse of dimensionality of the price scenario, and 
Existing research focuses on the trade-off between computational time and solution quality and often fails to reach the optimal solution in a reasonable time \cite{fischetti_deep_2018}. Two fundamental computational bottlenecks exist for this problem: scenario explosion in day-ahead price modeling and the VPP dispatch decision-making under uncertainties in real-time operation. The first stems from the exponential growth of scenario trajectories with increasing temporal resolution, while the second reflects the need to solve a nontrivial nonconvex second-stage problem that implements the first-stage commitments under the realized price trajectory. 
In this work, we develop an efficient algorithmic framework for solving the two-stage stochastic offering problem that addresses the computational bottlenecks above.
% As corroborated by extensive computational experiments, our tailored algorithmic framework is able to achieve orders-of-magnitude speedup over conventional approaches while maintaining solution quality.

\subsection{Literature Review}
\label{Literature Review}

The OR\&MS literature has extensively examined strategic offering/bidding behavior within day-ahead electricity markets \cite{rintamaki_strategic_2020} \cite{sunar_strategic_2019}. In existing models, DER aggregators (or VPPs) submit optimal offer curves to earn profit from the wholesale market; subsequently, the second-stage problem addresses the DER real-time dispatch problem \cite{rintamaki_strategic_2020}. The accurate formulation of two-stage adaptive decision-making relies on the modeling of heterogeneous uncertainties, such as price and DER production \cite{yu_uncertainties_2019}.

Early research employed deterministic optimization frameworks for the offering problem \cite{hellmers_operational_2016}. These approaches proved inadequate with high penetration levels of DERs, prompting adoption of stochastic programming (SP) \cite{kraft_stochastic_2023} and robust optimization (RO) \cite{sun_robust_2021} methodologies. As for SP, reference \cite{plazas_multimarket_2005} first propose an ARIMA-based price scenario generation method. However, deriving credible probabilistic price distributions remains methodologically challenging \cite{fan_min-max_2014}. To address “curse of dimensionality”, scenario reduction techniques are proposed accordingly  \cite{plazas_multimarket_2005}. Reference \cite{kim_sample_2024} proposes a Markov decision process-based approximate dynamic programming for multi-stage bidding strategies, while \cite{zheng_arbitraging_2022} quantifies the energy storage unit optionality across real-time markets using similar techniques.
Following \cite{kim_sample_2024} and \cite{zheng_arbitraging_2022}, we adopt a Markov process to model the day-ahead price and generate finite sample price trajectories accordingly. Note that this price modeling approach still faces a dimensionality challenge: the total number of price trajectories is exponential in the number of discretized price states per hour. Wholesale market protocols also require careful definition of the real-time LMPs, which are linked to the day-ahead price \cite{kim_benefits_2021, krishnamurthy_energy_2018}. To reflect the anti-arbitrage logic embedded in U.S. two-settlement markets, we represent real-time imbalance settlement through an ``incentive-penalty'' mechanism linked to the day-ahead market prices. This stylized term follows the practical requirement that intentional deviations from day-ahead commitments should not be rewarded. This term captures market fundamentals, but it requires specialized decomposition techniques to maintain computational tractability. 

RO presents an alternative paradigm for VPP offering strategies. Its distribution-free nature circumvents the need for precise probabilistic characterizations, requiring only uncertainty bounds that prove simpler to estimate empirically \cite{fan_min-max_2014}. This framework guarantees feasibility across all uncertainty realizations within prescribed sets. Adaptive robust optimization (ARO) extends this approach by enabling tunable conservativeness through budget parameters \cite{baringo_stochastic_2018}. Reference \cite{baringo_offering_2011} pioneered RO application to price-taker bidding curves, demonstrating significant computational advantages over SP counterparts \cite{ben-tal_robust_2009}. Multi-stage formulations have subsequently emerged through ARO frameworks \cite{attarha_affinely_2019}. Despite computational benefits, the inherent limitation of RO lies in its conservatism. This limitation motivates the integration of SP and RO, leveraging their complementary strengths \cite{baringo_stochastic_2018}.

Two-stage adaptive robust optimization (2S-ARO) frameworks dominate current VPP offering models, as comprehensively surveyed in \cite{sinha_review_2018}, \cite{kleinert_survey_2021} and \cite{beck_survey_2023}. For mixed-asset VPPs integrating diverse generation resources, computational efficiency becomes paramount for practical deployment. The fundamental challenge lies in the strong NP-hardness of 2S-ARO \cite{sun_robust_2021}---even finding an $\epsilon$-approximate solution lacks polynomial-time guarantees. Reference \cite{bertsimas_adaptive_2013} addresses this intractability through an alternating direction method that accepts suboptimal second-stage problem solutions. Subsequently, reference \cite{zeng_solving_2013} introduces the column-and-constraint generation (CC\&G) algorithm, substantially accelerating convergence compared to traditional Benders decomposition. However, the CC\&G algorithm faces scalability limitations when handling large-scale scenario trajectories and polyhedral uncertainty sets. Two fundamental challenges arise: First, the computational burden of solving the master problem (MP) escalates significantly as the number of scenarios grows, since each iteration introduces additional copies of second-stage variables. Commercial solvers like Gurobi struggle to achieve optimality even for moderate-sized MPs after several iterations. Second, solving the second-stage problem requires nonconvex max-min optimization when constraint uncertainty is present, compounding computational complexity. Three solution strategies are commonly used to solve the robust second-stage problem \cite{kleinert_survey_2021}: (i) KKT reformulation, where complementarity conditions are linearized via big-M methods, yielding mixed-integer linear programs (MILPs) \cite{kang_stochastic-robust_2023}; (ii) dual transformation, converting nonconvex max-min structures to single-level formulations through appropriate bilinear term linearization \cite{bertsimas_adaptive_2013}; and (iii) heuristic approaches that trade optimality for computational efficiency \cite{shao_neural_2025}.

\subsection{Contribution}

In this study, we propose a two-stage stochastic ARO formulation, reformulate it as a two-stage stochastic linear programming (LP) problem, and develop corresponding solution algorithms for the optimal VPP day-ahead offering problem. The existing literature suggests that the computational tractability of the problem is fundamentally limited by the curse of dimensionality of SP and the computational issues of solving large-scale 2S-ARO problems. To circumvent these limitations, the following four contributions are made: 
% Three key observations emerge from existing literature: (i) two-stage stochastic ARO necessities uncertainty modeling methods adhering to market rules; (ii) current SP methods rely on scenario reduction, limiting scalability; and (iii) there are two aspects of the CC\&G algorithm that make it difficult to implement in practice: The MP becomes challenging to solve as the number of scenarios increases due to multiple copies of second-stage variables; the second-stage problem requires solving a nonconvex max-min optimization for problems with constraint uncertainty. This work presents a novel two-stage stochastic ARO offering model for VPPs. The contributions of this study are fourfold:
\begin{enumerate}

\item Second-stage problem reformulation: In Section~\ref{reformulated formulation}, we exploit the cumulative-sum storage structure to reformulate the second-stage problem as a structured LP with piecewise-linear cost. This structural reformulation enables a fast value-function oracle for repeated scenario-wise recourse evaluations and subgradient computation in the outer algorithm.

\item Max-min decoupling: We show in Section~\ref{reformulated formulation} that under mild modeling assumptions, the nonconvex robust second-stage DER dispatch problem can be equivalently reformulated as an LP, which simplifies the overall problem as a two-stage stochastic LP problem.

% \item Problem reformulation in Section~\ref{reformulated formulation}: Power balance constraints involving first-stage decisions are embedded into the objective of recourse problem to enable gradient-based optimization. The cumulative sum constraint structure, combined with the reformulated PWL objective, enables efficient greedy algorithms.

\item Greedy algorithm for solving the second-stage problem: An efficient custom greedy algorithm is proposed in Section~\ref{Greedy Algorithm} for the reformulated second-stage problem. This algorithm synergizes with the overall algorithmic framework and yields a clear computational advantage over a general-purpose LP solver in repeated scenario-wise evaluation.

\item Projected subgradient method: A projected subgradient method with pool-adjacent-violators algorithm (PSM-PAVA) is designed in Section~\ref{Projected Subgradient Method} to solve the overall problem efficiently. This tailored gradient-informed inner approximation algorithm outperforms benchmark algorithms with significant total speedup.

% the cut-based outer approximation in CC\&G/Benders decomposition, achieves significant speed-up.

\end{enumerate}

\subsection{Paper Organization}
This paper proceeds as follows. Section~\ref{sect:formulation} establishes the mathematical formulation of the two-stage stochastic ARO model. The reformulation is derived in Section~\ref{reformulated formulation}. Section~\ref{algorithm design part} develops algorithms, including a greedy algorithm and PSM-PAVA. Extensive case studies are presented in Section~\ref{case study}. Finally, Section~\ref{conclusion} concludes the paper.

\input{section_formulation_revised_MWQ.tex}

\input{reformulation_cost-to-go.tex}

\section{Algorithm Design}
\label{algorithm design part}

The overall two-stage stochastic LP problem is highly structured, which can be exploited to develop efficient algorithms. In this section, a computationally efficient oracle is designed to solve the decoupled second-stage LP problem in \eqref{eq:2S-LP}. With this oracle, we then introduce the PSM algorithm to effectively solve the two-stage stochastic LP problem.

\input{LSA.tex}

\input{PSM.tex}

\subsection{Overall Algorithm}

The complete PSM-PAVA procedure is summarized in Algorithm~\ref{alg:psg-pava}. At each iteration, the greedy oracle evaluates the second-stage problem for all scenarios and identifies the active slopes, from which the subgradient is computed by \eqref{eq:subgrad-full}. The iterate is then updated by a projected subgradient step, where feasibility is enforced by the PAVA-based projection in \eqref{eq:isotonic-proj}. The process continues until the practical termination criterion in \eqref{eq:stopping} is satisfied.

\begin{algorithm}[t]
\caption{PSM-PAVA for the two-stage stochastic LP problem}
\label{alg:psg-pava}
\begin{algorithmic}[1]
\REQUIRE Initial offering $p^{\mathrm{DA},(0)} \in \mathcal{X}$, initial step size $\alpha_0$, scenario set $\{\omega,\rho_\omega\}_{\omega\in\Omega}$, tolerance $\varepsilon_{\mathrm{rel}}$, step size bounds $\alpha_{\min},\alpha_{\max}$, maximum iterations $K_{\max}$
\ENSURE Incumbent solution $p^{\mathrm{DA},*}$ and objective value $\Phi(p^{\mathrm{DA},*})$

\STATE Initialize $\Phi^{\mathrm{best}} \gets +\infty$, $p^{\mathrm{DA},\mathrm{best}} \gets p^{\mathrm{DA},(0)}$

\FOR{$k=0,1,\dots,K_{\max}-1$}
    \FORALL{$\omega\in\Omega$}
        \STATE Compute worst-case PV profile $\hat{p}^{\mathrm{PV},*}(\omega)$ via \eqref{eq:worst_case_pv_prop}
        \STATE Solve the second-stage LP problem in \eqref{eq:2S-LP} by Algorithm~\ref{alg:local_search_algorithm}; obtain $V_\omega^{(k)}$ and active slopes
    \ENDFOR
    \STATE Compute $\Phi(p^{\mathrm{DA},(k)})$ and $g^{(k)}$ via \eqref{eq:subgrad-full}    \IF{$\Phi(p^{\mathrm{DA},(k)}) < \Phi^{\mathrm{best}}$}
        \STATE $p^{\mathrm{DA},\mathrm{best}} \gets p^{\mathrm{DA},(k)}$, $\Phi^{\mathrm{best}} \gets \Phi(p^{\mathrm{DA},(k)})$
    \ENDIF
    \IF{$k=0$}
        \STATE Set $\alpha_k \gets \alpha_0$
    \ELSE
        \STATE Compute $\alpha_k$ by \eqref{eq:spectral-step} if applicable; otherwise set $\alpha_k \gets \alpha_0$; clip $\alpha_k$ to $[\alpha_{\min},\alpha_{\max}]$
    \ENDIF
    \STATE Compute $p^{\mathrm{DA},(k+1)} \gets \Pi_{\mathcal{X}}(p^{\mathrm{DA},(k)}-\alpha_k g^{(k)})$ via PAVA; evaluate $\Phi(p^{\mathrm{DA},(k+1)})$
    \IF{$\Phi(p^{\mathrm{DA},(k+1)}) < \Phi^{\mathrm{best}}$}
        \STATE $p^{\mathrm{DA},\mathrm{best}} \gets p^{\mathrm{DA},(k+1)}$, $\Phi^{\mathrm{best}} \gets \Phi(p^{\mathrm{DA},(k+1)})$
    \ENDIF
    \IF{\eqref{eq:stopping} is satisfied}
        \STATE \textbf{break}
    \ENDIF
\ENDFOR
\STATE \textbf{return} $p^{\mathrm{DA},*} \gets p^{\mathrm{DA},\mathrm{best}}$, \ $\Phi(p^{\mathrm{DA},*}) \gets \Phi^{\mathrm{best}}$
\end{algorithmic}
\end{algorithm}

\begin{figure}[!t]
\centering
\includegraphics[width=5 in]{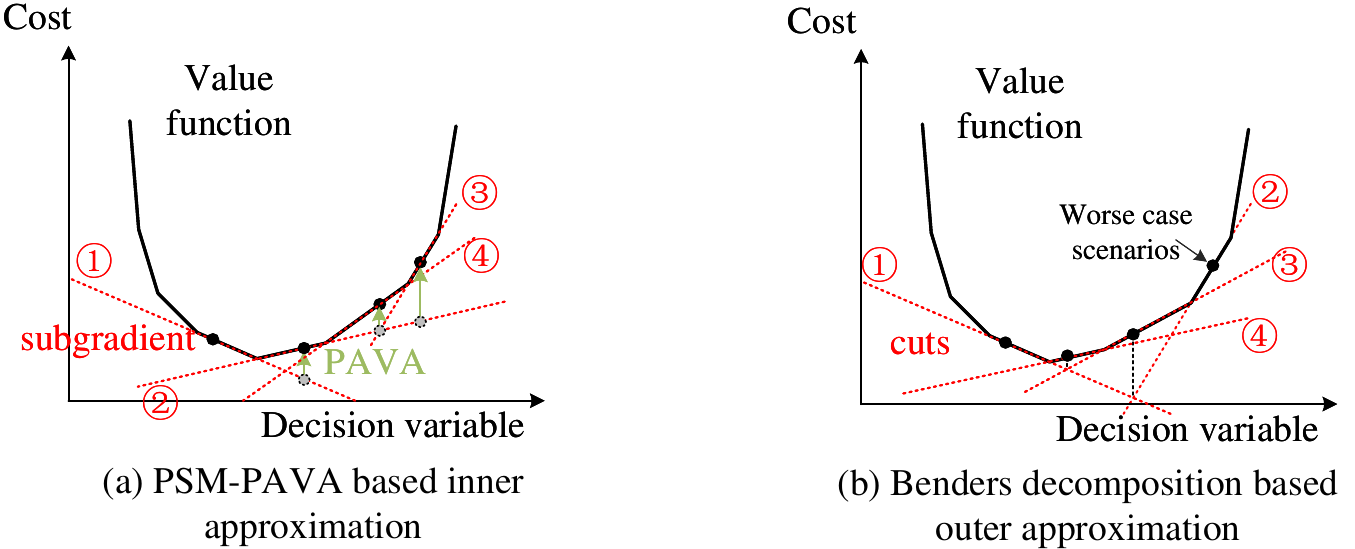} % 请将fig1替换为实际的图像文件名
\caption{Two ways of approximation of the value function.}
\label{InnerApproximation}
\end{figure}

The key distinction between the PSM-PAVA algorithm and Benders decomposition \cite{sun_robust_2021} lies in the approximation of the value function $V_\omega(p^\DA)$. As illustrated in Figure~\ref{InnerApproximation}, Benders decomposition builds an \emph{outer approximation} of $V_\omega(p^\DA)$ by iteratively adding cuts to the master problem. Although these cuts (value and the subgradient of the second-stage problem) progressively tighten the approximation, they also enlarge the master problem and increase the per-iteration computational cost. In contrast, the PSM-PAVA algorithm can be interpreted as a gradient-informed first-order local \emph{inner approximation} strategy. At each iterate $p^{\mathrm{DA},(k)}$, the greedy oracle evaluates the scenario-wise $V_\omega(p^{\mathrm{DA},(k)})$ exactly and returns a valid subgradient, which is then used to generate a feasible projected update. Hence, unlike Benders decomposition, PSM-PAVA does not accumulate cuts or expand a master problem, but instead does not increase the size of the optimization model across iterations. This is one key reason for the computational advantage reported in Section~\ref{case study}. It should be noted that the comparison reflects the total computational benefit of the combined reformulation-and-algorithm framework, which is the relevant metric for practical deployment.

\section{Case Study}
\label{case study}

This section evaluates the proposed model and algorithm through numerical case studies examining computational efficiency, solution quality, and scalability. 

\subsection{Experiment Setup}
Numerical case studies utilize real operational data from multiple sources in \cite{us_department_of_energy_pathways_2025}. Historical day-ahead price data are obtained from U.S. utilities. DER parameters and operating profiles are compiled from distribution-level utility measurements, including: (i) 10-kW rooftop solar PV; (ii) battery energy storage units; and (iii) deterministic aggregated load. For clarity, the main DER configurations and corresponding data sources used for the VPP in the case study are summarized in Table~\ref{tab:DER_setup}. To reflect realistic operational conditions, we construct a utility-operated VPP comprising 200 PV units and 100 energy storage units. At each time slot, five electricity price states are specified, which define a Markov-chain scenario space with $5^{24}$ possible day-ahead price trajectories. In the experiments, a finite number of representative day-ahead price trajectories are sampled from this space, as illustrated in Figure~\ref{fig:markov}. We note that, the sampled day-ahead prices satisfy the assumption in Remark \ref{remark real time price}: $\lambda_{t,\omega}^{\DA} > c^{\PV}  > 0,  \forall t\in\mathcal{T},\ \forall\omega\in\Omega$. The budgeted uncertainty set for solar PV generation over the daily horizon is modeled in previous work \cite{Meng2025DER}, as shown in Figure~\ref{fig:PV}. The experiments are executed in MATLAB (R2024a) on a 64-bit Windows 11 workstation with an Intel\textsuperscript{\textregistered} Core\texttrademark{} i7-6700HQ CPU running at 2.64 GHz and 24 GB of RAM, using YALMIP for modeling and GUROBI 12.0 as the solver.

% \begin{figure}[!t]
% \centering
% \includegraphics[width=5.2 in]{Figure/Parameters.png} % 请将fig1替换为实际的图像文件名
% \caption{Pre-determinted uncertainty parameters: (1) Sampled 4 day-ahead price scenarios through Markov process; (2) Static uncertainty set of PV generation.}
% \label{MarkovandPV}
% \end{figure}

\begin{figure}
    \centering
    \subfloat[ Sampled four price scenarios]{%
        \includegraphics[width=0.3\linewidth]{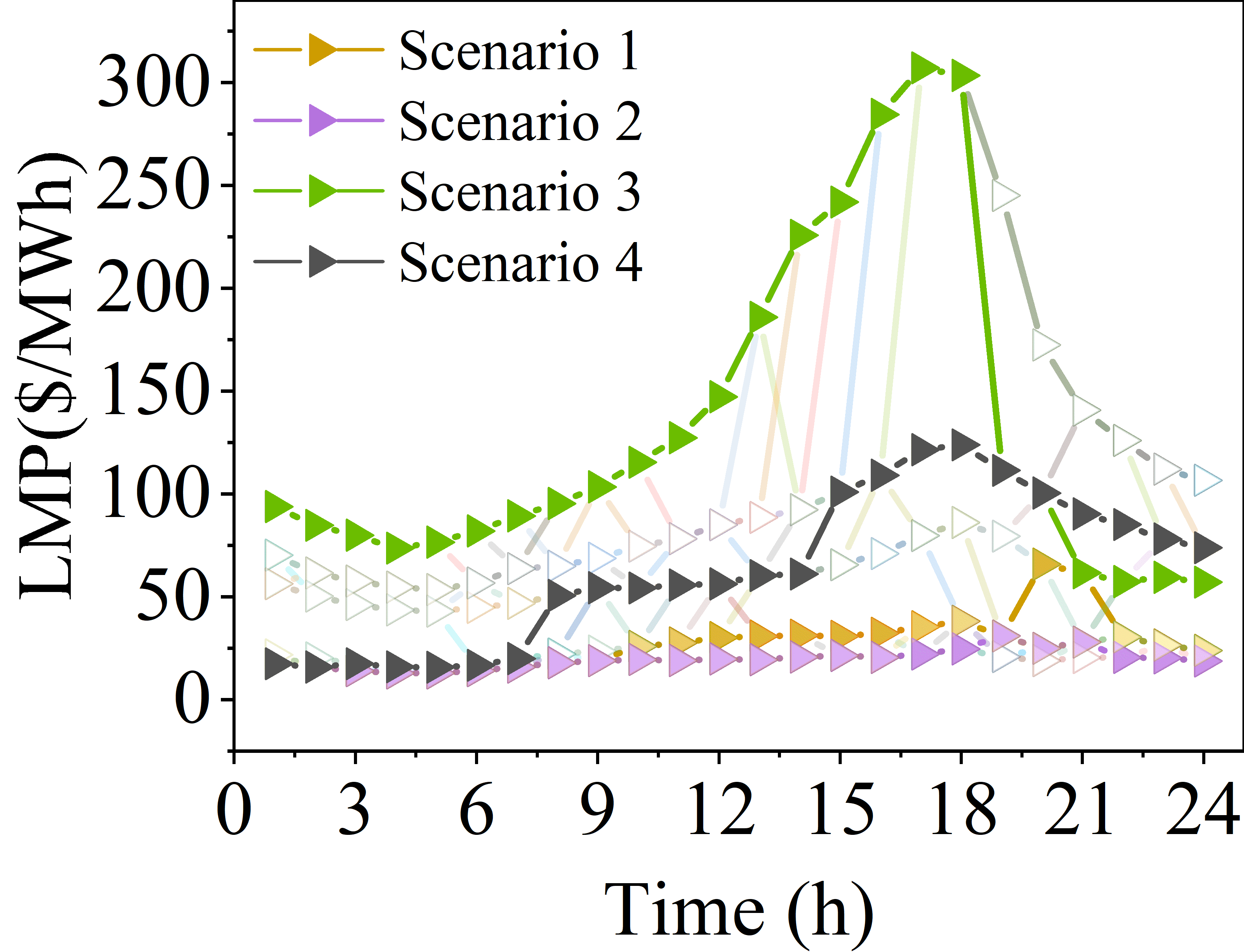}
        \label{fig:markov}}
    \qquad
    \subfloat[Static PV uncertainty set]{%
        \includegraphics[width=0.3\linewidth]{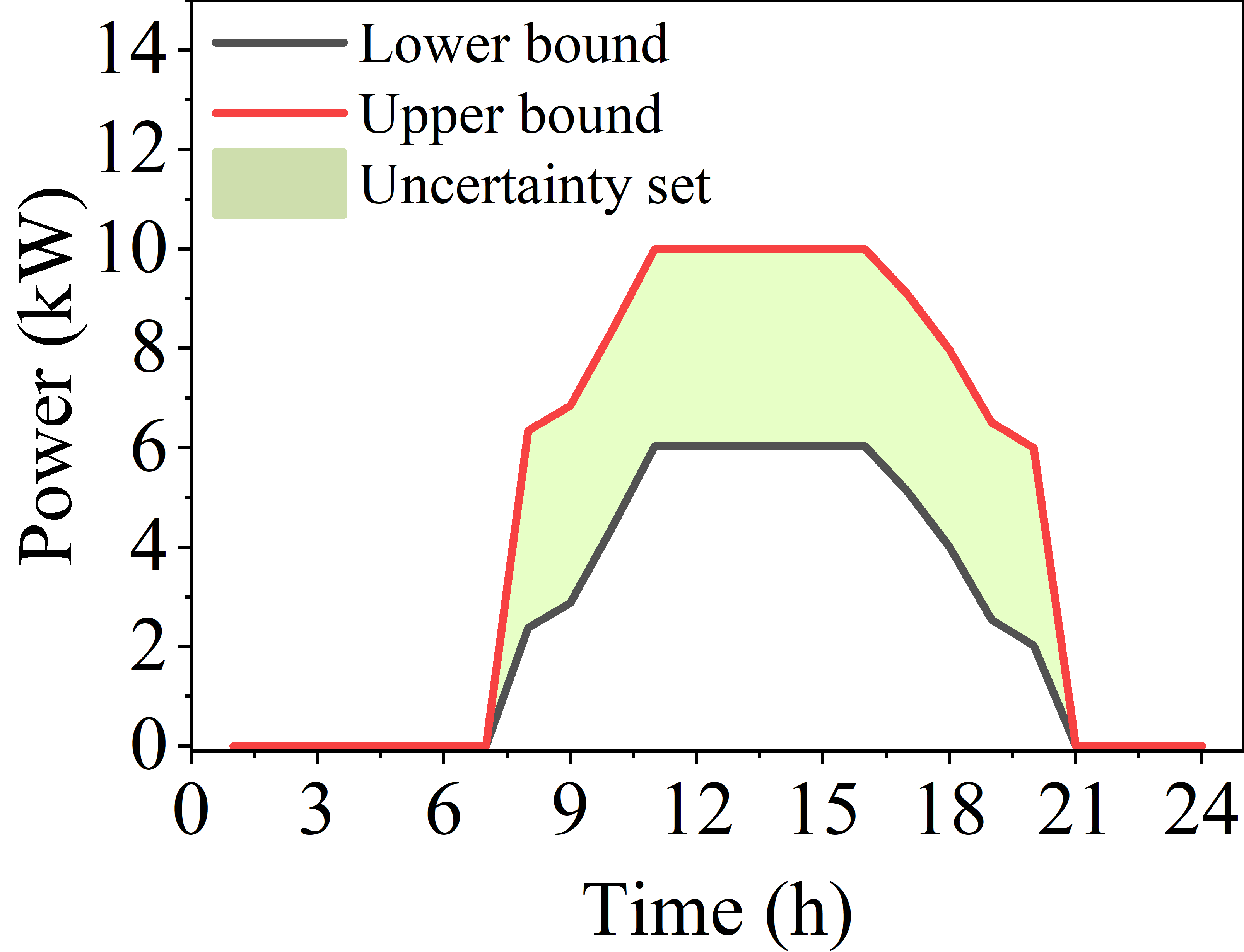}
        \label{fig:PV}}
    \\
    \caption{Pre-determined uncertainty parameters.}
    \label{MarkovandPV}
\end{figure}

\begin{table}[t]
  \centering
  \caption{DER configurations used in the case study.}
  \label{tab:DER_setup}
  \scriptsize
  \setlength{\tabcolsep}{3pt}
  \renewcommand{\arraystretch}{1.05}
  \sisetup{detect-weight=true,detect-inline-weight=math,table-number-alignment=center}

  \begin{tabular}{
    l
    S[table-format=3.0]
    S[table-format=2.1]
    S[table-format=2.1]
    l
  }
  \toprule
  \multicolumn{1}{c}{Component} &
  \multicolumn{1}{c}{Number of units} &
  \multicolumn{1}{c}{Power rating (kW)} &
  \multicolumn{1}{c}{Energy rating (kWh)} &
  \multicolumn{1}{c}{Data source} \\
  \midrule
  Rooftop solar PV         & 200 & 10.0 & {--}  & Utility historical generation records \\
  Battery storage unit   & 100 & 11.3 & 14.5  & Distribution-level utility measurements \\
  Aggregated load        & {--} & {--} & {--} & Utility smart-meter data \\
  \bottomrule
  \end{tabular}
\end{table}

\subsection{Comparison of Greedy Algorithm and LP Solver}
\label{Comparison of Greedy Algorithm}

The first case study evaluates the computational performance of the greedy algorithm. To assess the performance of the greedy algorithm, we benchmark it against the advanced commercial solver Gurobi 12.0. Both methods employ the same outer-level PSM-PAVA algorithm; the key distinction lies in their respective solution strategies for the second-stage problem. For precision in time measurements, in this paper, when the number of scenarios is small and the total computation time falls below one second, the algorithm is executed repeatedly across multiple sampling runs to determine the average solution time per instance. 

To establish the generalizability of our findings, we examine three distinct problem scales with $|\Omega|=$25, 100, and 250 price scenarios, respectively. All scenarios are generated using the Markov process presented in Section~\ref{sect:Markov_Process}. Figure~\ref{fig:lsa_vs_solver_all} illustrates the convergence profiles of the optimal objective values of the two-stage stochastic LP problems across different scenario settings. The computational time per iteration is shown in a box plot. A comprehensive performance analysis comparing the greedy algorithm and the benchmark LP solver for the two-stage stochastic LP problem is provided in Table~\ref{tab:greedy algorithm_vs_solver}.

\begin{table}[t]
  \centering
  \caption{Performance comparison for greedy algorithm and LP solver (Number of iteration=50, $\Gamma=6$).}
  \label{tab:greedy algorithm_vs_solver}
  % ---- single-column friendly tweaks ----
  \scriptsize
  \setlength{\tabcolsep}{2.5pt}
  \renewcommand{\arraystretch}{1.05}
  \sisetup{detect-weight=true,detect-inline-weight=math,table-number-alignment=center}

  \begin{tabular}{
    S[table-format=3.0]
    S[table-format=2.2] S[table-format=2.2]
    S[table-format=2.2] S[table-format=2.2]
    S[table-format=4.3] S[table-format=4.5]
    S[table-format=1.3,table-space-text-post = \%]
  }
  
\toprule
\multicolumn{1}{c}{\multirow{2}{*}{Number of scen.}} & \multicolumn{2}{c}{Second-stage LP (s)} &
\multicolumn{2}{c}{Second-stage LP (ms/iter)} &
\multicolumn{2}{c}{Two-stage stochastic LP obj (\$)} &
\multicolumn{1}{c}{\multirow{2}{*}{Diff (\%)}} \\

\cmidrule(lr){2-3}\cmidrule(lr){4-5}\cmidrule(lr){6-7}
\multicolumn{1}{c}{} & \multicolumn{1}{c}{Greedy algorithm} & \multicolumn{1}{c}{LP} &
\multicolumn{1}{c}{Greedy algorithm} & \multicolumn{1}{c}{LP} &
\multicolumn{1}{c}{Greedy algorithm} & \multicolumn{1}{c}{LP} & \multicolumn{1}{c}{} \\
  \midrule
   25  & \multicolumn{1}{c}{\makecell[c]{1.16\\(3.29$\times$)}} & 3.82 &  \multicolumn{1}{c}{\makecell[c]{4.58\\(2.21$\times$)}} & 10.13 & 1891.35 & 1891.34 & 0.00053 \\
  100  & \multicolumn{1}{c}{\makecell[c]{4.01\\(1.99$\times$)}} & 7.96 & \multicolumn{1}{c}{\makecell[c]{19.66\\(1.98$\times$)}} & 38.98 & 1668.70 & 1668.68 & 0.0012 \\
  250  & \multicolumn{1}{c}{\makecell[c]{9.42\\(1.70$\times$)}} & 15.97 & \multicolumn{1}{c}{\makecell[c]{47.42\\(2.06$\times$)}} & 97.65 & 1618.21 & 1618.21 & 0.00001 \\
  \bottomrule
  \end{tabular}
\end{table}

% \begin{figure}[!t]
% \centering
% \includegraphics[width=6.2 in]{Figure/LSAvsSolver.png} % 请将fig1替换为实际的图像文件名
% \caption{Performance comparison of the iteration process, optimality and computational time for the proposed greedy algorithm and the benchmark commercial LP solver for the second-stage nonconvex max-min problem of the 2S-ARO problem (Maximum iteration =50; $\Gamma=6$).}
% \label{greedy algorithmvsSolver}
% \end{figure}

\begin{figure*}[!t]
\centering

% ------------------ Row 1 ------------------
\subfloat[Iter./Opt. (|$\Omega$|=25)]{
    \includegraphics[width=0.30\textwidth]{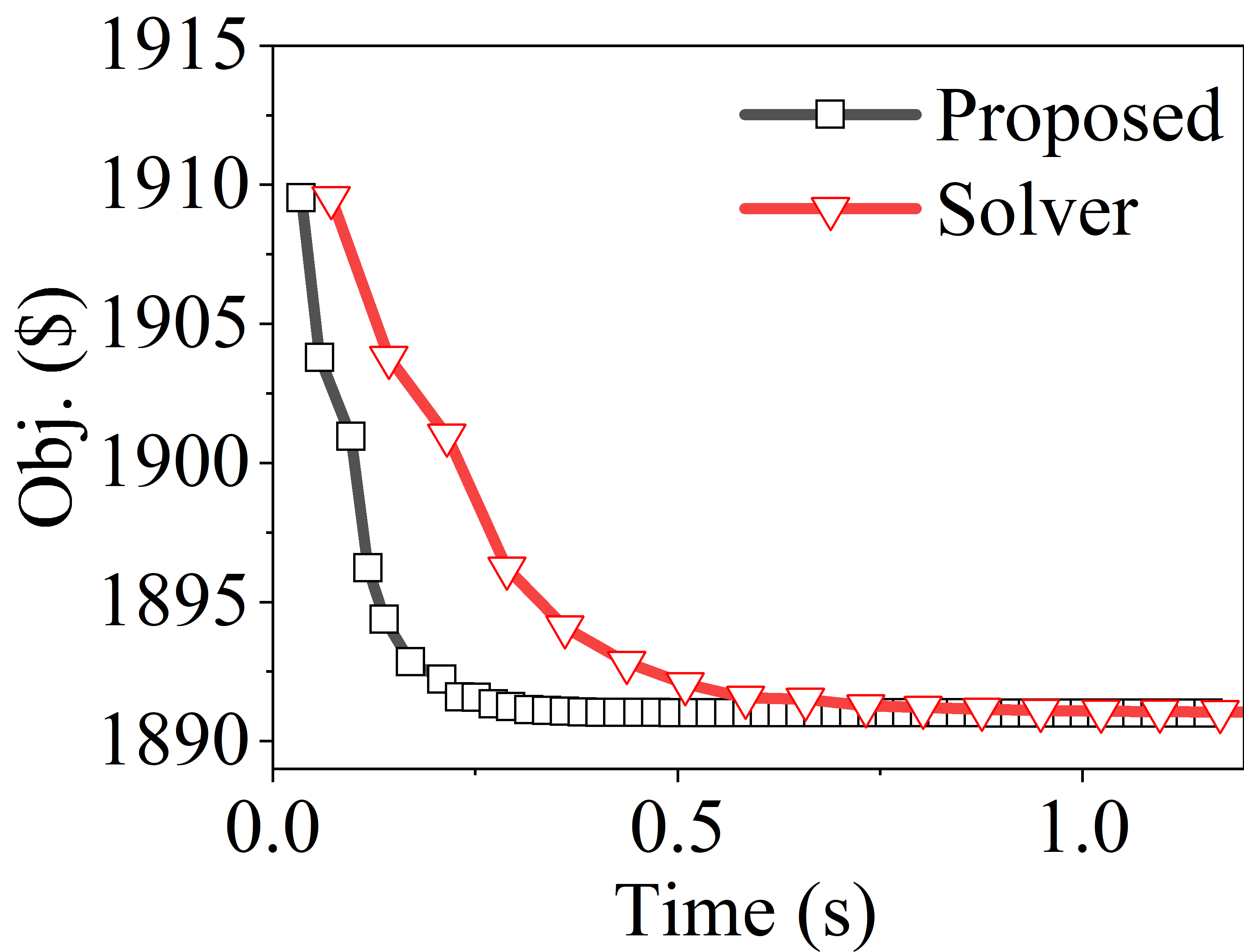}
}
\hfill
\subfloat[Iter./Opt. (|$\Omega$|=100)]{
    \includegraphics[width=0.30\textwidth]{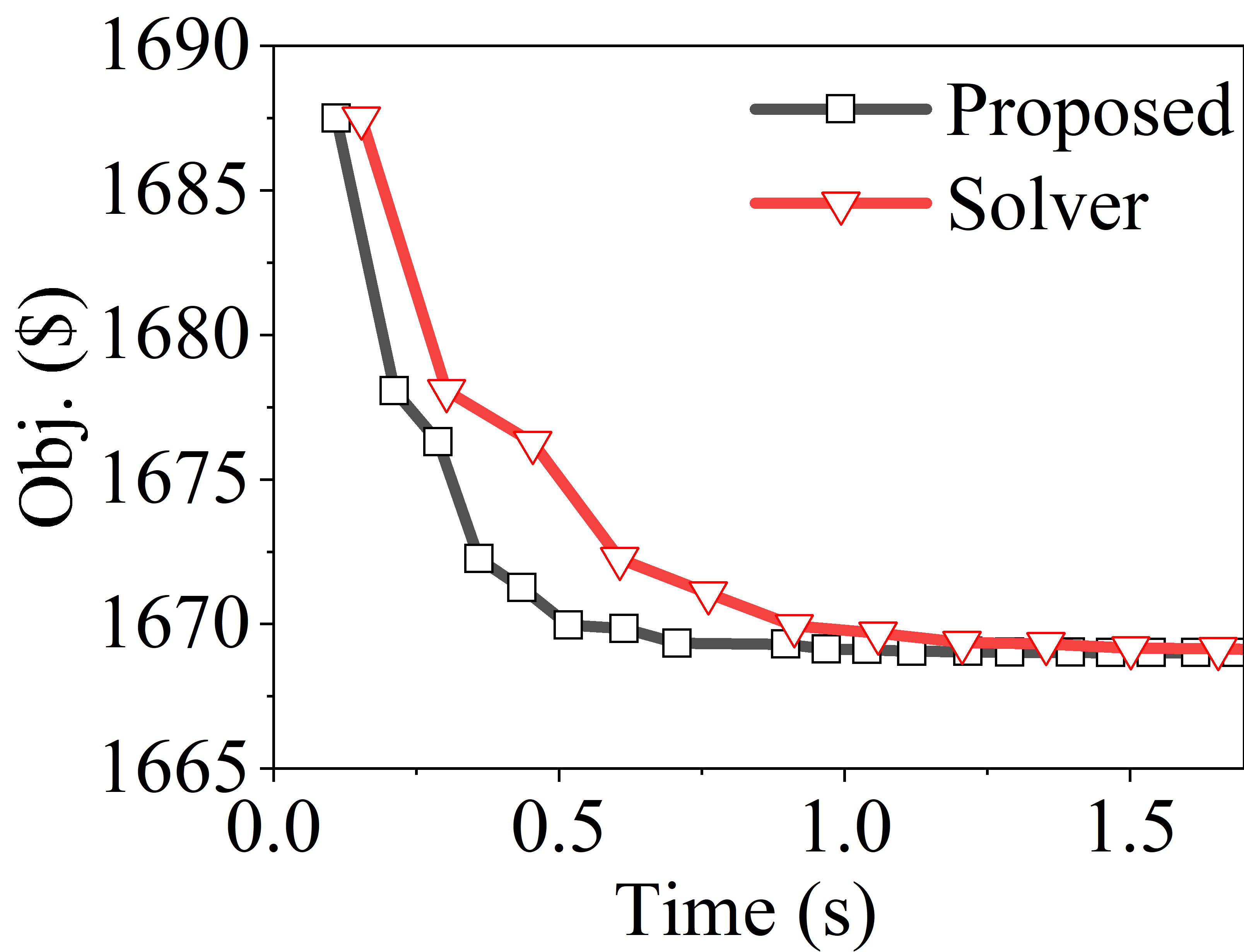}
}
\hfill
\subfloat[Iter./Opt. (|$\Omega$|=250)]{
    \includegraphics[width=0.30\textwidth]{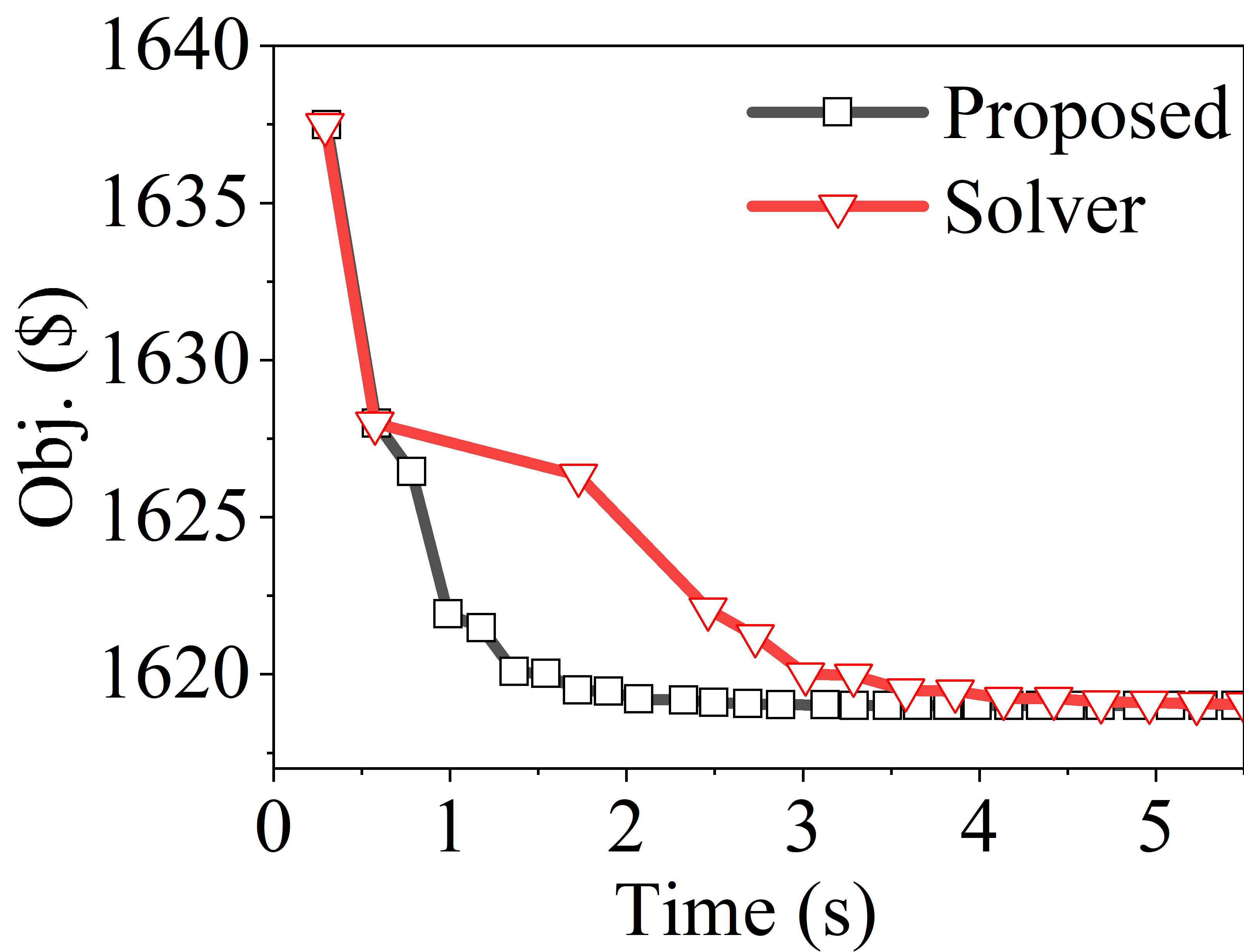}
}

\vspace{0pt}

% ------------------ Row 2 ------------------
\subfloat[Time per iter. (|$\Omega$|=25)]{
    \includegraphics[width=0.30\textwidth]{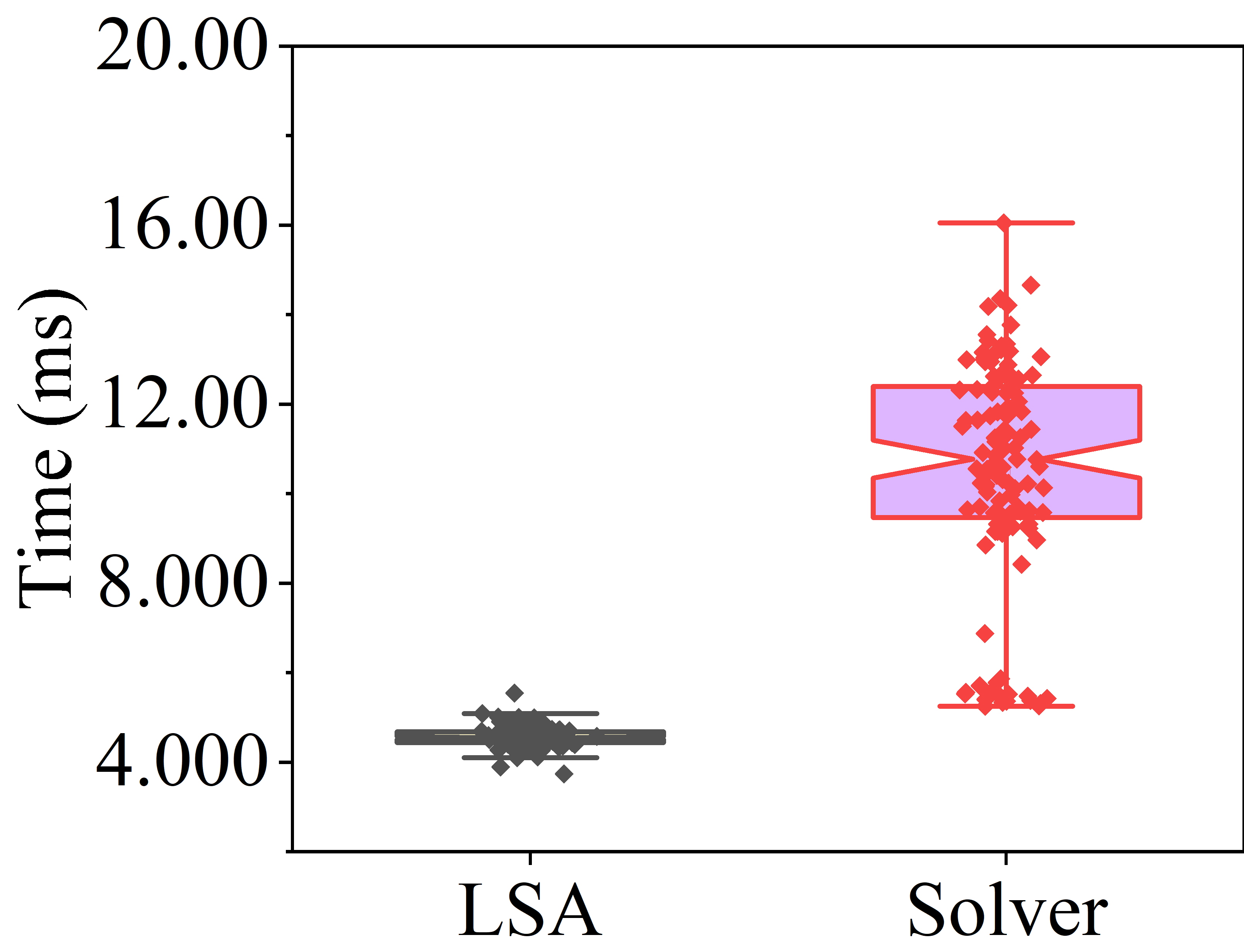}
}
\hfill
\subfloat[Time per iter. (|$\Omega$|=100)]{
    \includegraphics[width=0.30\textwidth]{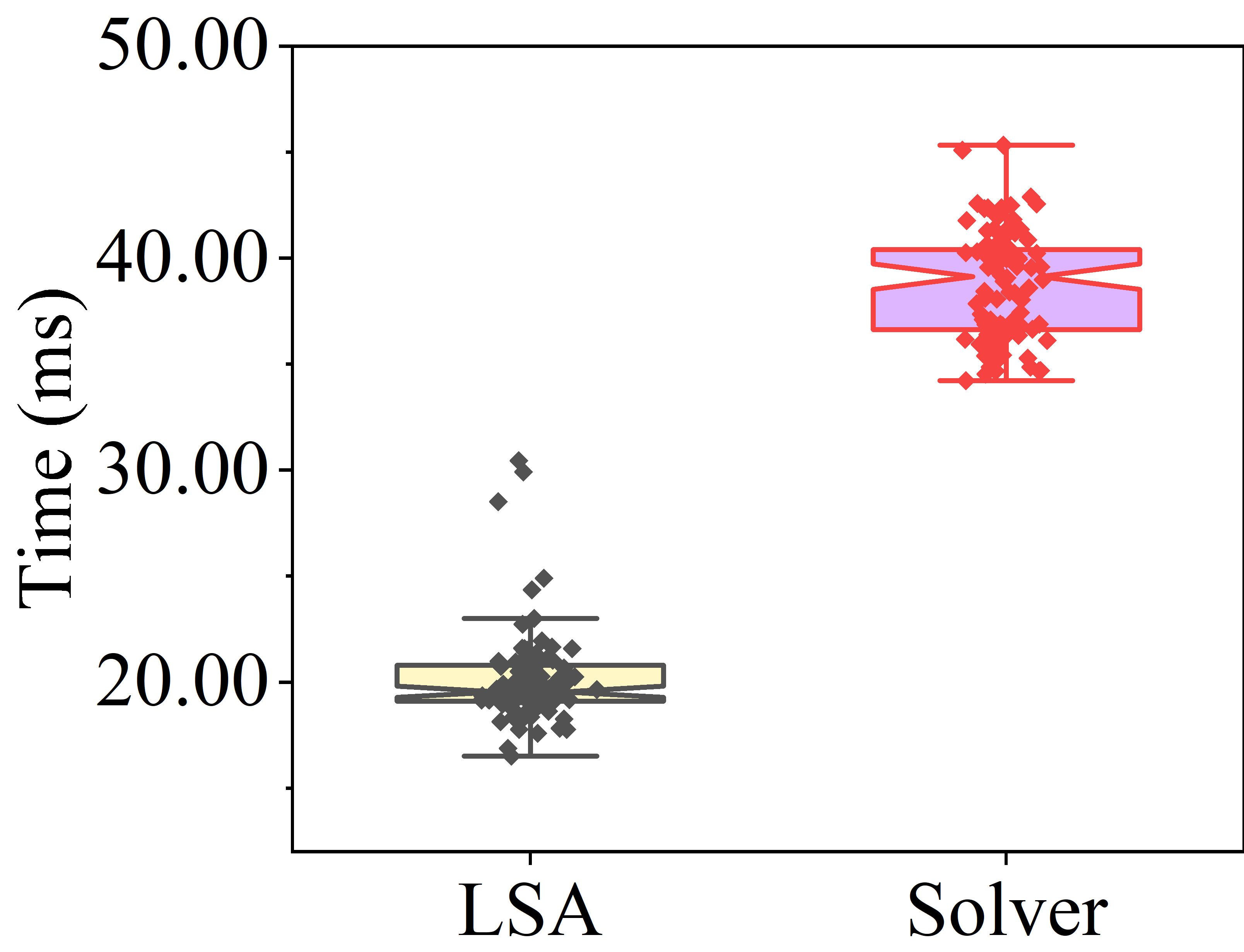}
}
\hfill
\subfloat[Time per iter. (|$\Omega$|=250)]{
    \includegraphics[width=0.30\textwidth]{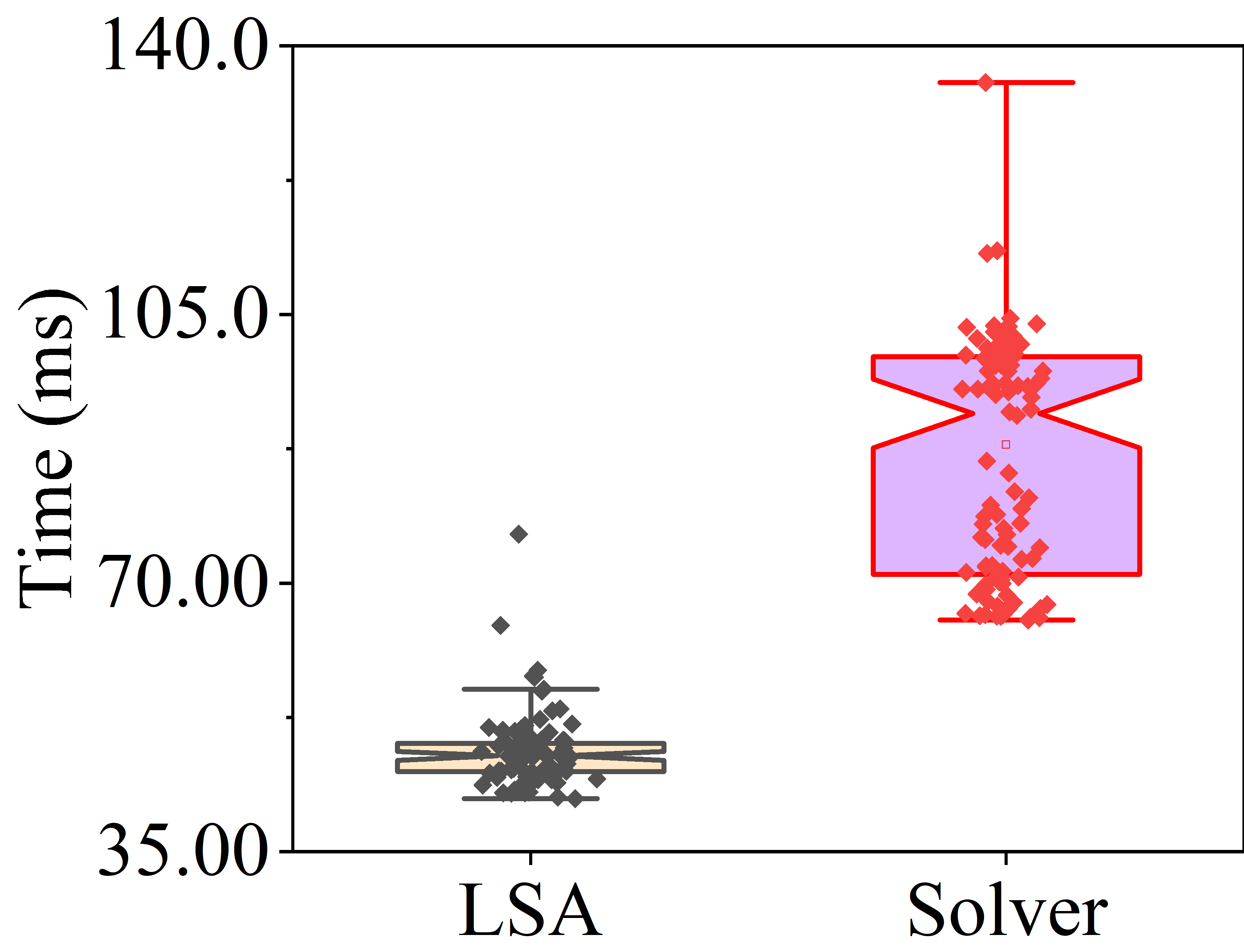}
}

\caption{Performance comparison of the proposed greedy algorithm and the benchmark commercial LP solver under different numbers of price trajectories. The top row illustrates the iteration process and optimality, while the bottom row reports the computational time per iteration (maximum iterations = 50, $\Gamma = 6$).}
\label{fig:lsa_vs_solver_all}
\end{figure*}

The experimental results demonstrate clear computational advantages of the greedy algorithm across all evaluated settings. Specifically, the greedy algorithm achieves remarkable computational acceleration compared to the commercial LP solver: the speedup factor reaches $1.99\times$ under the parameter setting ($\Gamma=6, |\Omega|=100$). A more than $1.98\times$ acceleration is observed in the average per-iteration computational time of the PSM-PAVA algorithm across three day-ahead price scenario settings. Regarding solution quality, Figure~\ref{fig:lsa_vs_solver_all} confirms that the greedy algorithm consistently achieves identical optimal objective values to those obtained by commercial LP solvers in all test instances. The percentage difference between the optimal value of a benchmark method and that of the greedy algorithm is within the preset Gap (Diff), which demonstrates that gains in computational efficiency are achieved without compromising the optimality. These results validate the advantage of leveraging problem-specific structural properties through specialized greedy algorithmic design, rather than employing general-purpose LP solvers for the reformulated two-stage stochastic LP problem.

\subsection{Comparison of Projected Subgradient Method and CC\&G}
\label{Comparison of Projected sudgradient method}

This section benchmarks the PSM-PAVA algorithm against the 
CC\&G algorithm \cite{zeng_solving_2013}, a well-established method 
for solving two-stage robust optimization problems. The CC\&G algorithm 
follows a master--subproblem iterative framework, where candidate 
first-stage decisions are generated in the master problem and 
worst-case realizations are identified through a second-stage 
adversarial subproblem; the resulting cuts are progressively added 
to refine the master problem until convergence. We note that CC\&G 
is naturally applied to the original 2S-ARO formulation~\eqref{eq:2S-ARO},
whereas PSM-PAVA operates on the two-stage stochastic 
LP~\eqref{eq:2S-LP}. To isolate the algorithmic contribution 
from the reformulation benefit, we additionally apply CC\&G directly 
to the same stochastic LP instance; this controlled comparison is 
reported in the ``Stochastic LP problem'' column of 
Table~\ref{tab:psm_ccg_perf}.

To provide a comprehensive comparison, we conduct experiments under different day-ahead price scenario settings. Figure~\ref{PSMvsCCG} illustrates the convergence behavior, solution quality, and computational efficiency of both the PSM-PAVA algorithm and the benchmark CC\&G algorithm under four price scenario sizes: 25, 100, 250, and 500 sampled day-ahead price scenarios. Detailed numerical results are reported in Table~\ref{tab:psm_ccg_perf}. In Figure~\ref{PSMvsCCG}, subfigure (a) illustrates the objective value iteration trajectories of the PSM-PAVA algorithm, and subfigure (b) illustrates those of the CC\&G algorithm. Both algorithms obtain identical optimal solutions, confirming the correctness of the proposed method. In terms of computational efficiency, the PSM-PAVA algorithm shows clear speedups compared with CC\&G. The overall speedup of PSM-PAVA over CC\&G has two distinct sources, 
both quantified in Table~\ref{tab:psm_ccg_perf}. First, the problem 
reformulation in Section~\ref{reformulated formulation} converts the 
nonconvex second-stage problem into the structured LP~\eqref{eq:recourse_given_worst_pv}, 
reducing computation by a factor of $1.7\times$--$3.3\times$ relative to 
solving the original 2S-ARO; this is measured by dividing the ``2S-ARO 
problem'' column by the ``Stochastic LP problem'' column in 
Table~\ref{tab:psm_ccg_perf}, and is consistent with the greedy 
speedup reported in Table~\ref{tab:greedy algorithm_vs_solver}. 
Second, the inner-approximation structure of PSM-PAVA avoids the 
accumulation of second-stage variable copies that inflates the CC\&G 
master problem at each iteration; on the same stochastic LP instance 
this algorithmic advantage contributes a further $37\times$--$101\times$ 
speedup, as shown in the ``Stochastic LP problem'' column. 
We note that part of the per-iteration gap ($\times$2116--$\times$3689) 
is structural: the per-iteration cost of CC\&G grows with the iteration 
count $k$ because each cut appends a full copy of second-stage 
variables, whereas PSM-PAVA maintains a nearly constant per-iteration 
cost by design. Multiplying the two contributions recovers the full 
$122\times$--$204\times$ overall speedup exactly for each scenario size 
tested. The numerically indistinguishable objective differences between PSM-PAVA and CC\&G are mainly due to the stopping tolerance in the outer first-order iterations and do not affect the qualitative computational conclusions.

\begin{figure}
    \centering
    \subfloat[PSM-PAVA algorithm]{%
       \includegraphics[width=0.34\linewidth]{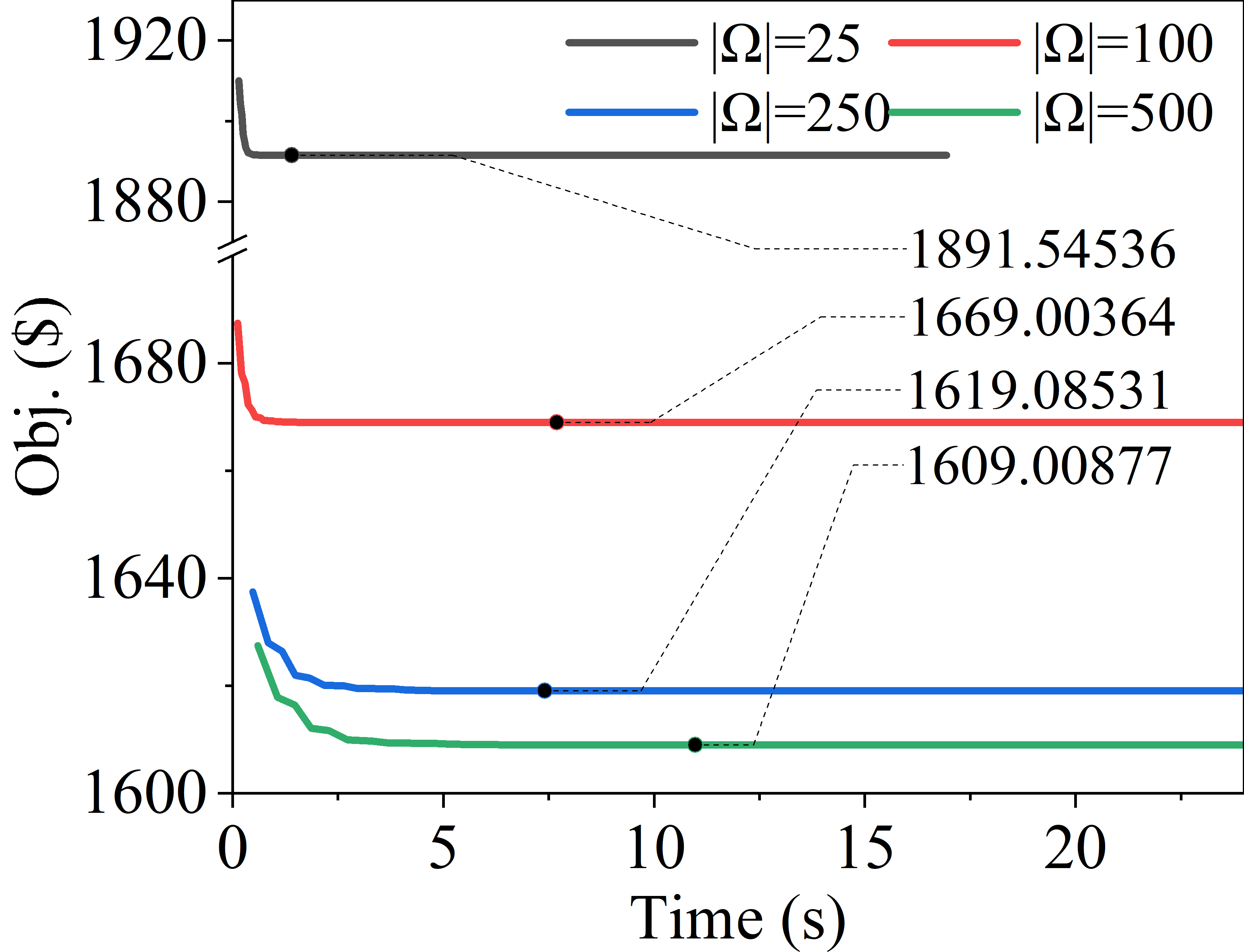}}
       % \hfill
    \subfloat[CC\&G algorithm]{%
    \quad 
    \includegraphics[width=0.34\linewidth]{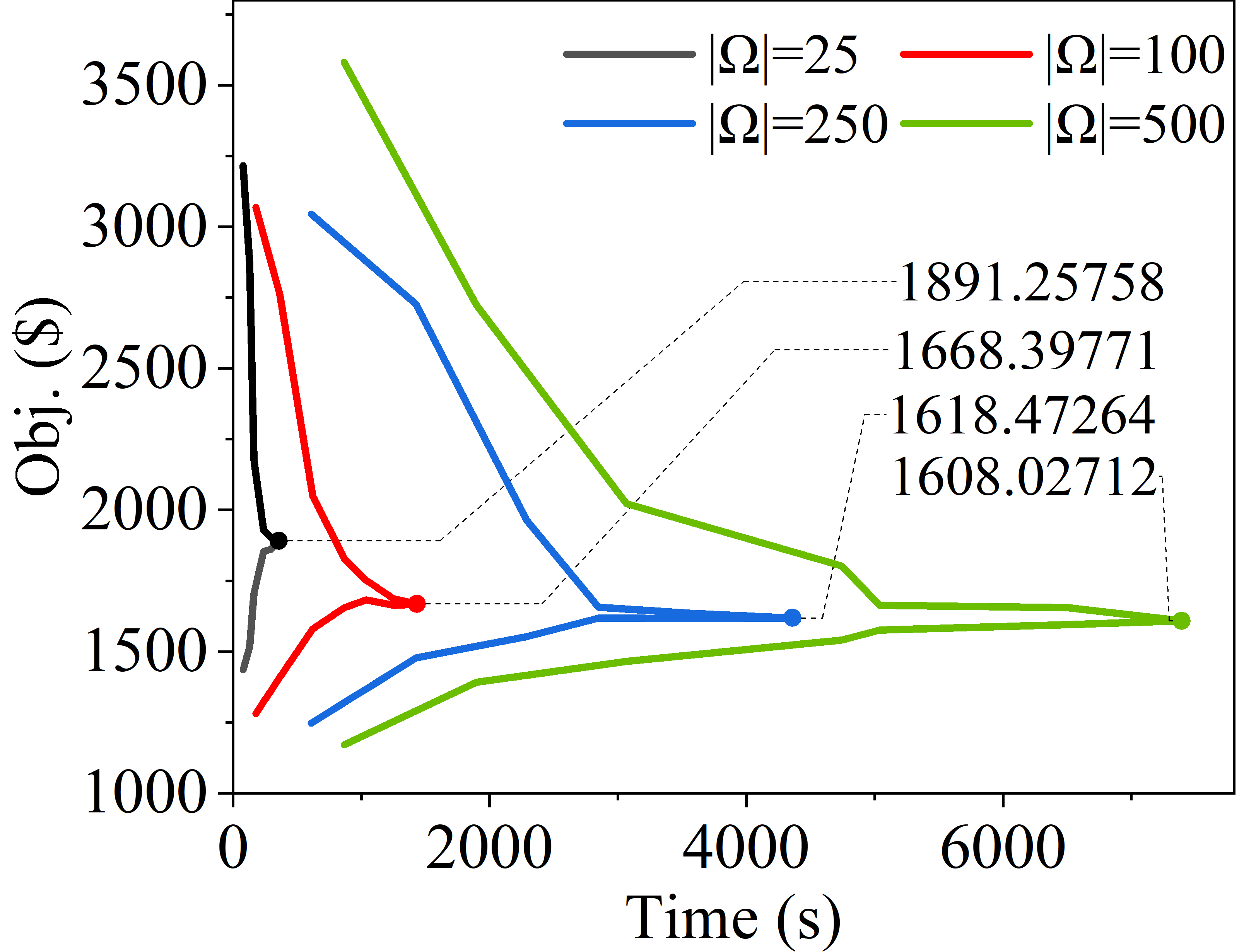}}
       \\
    \caption{Performance comparison of the iteration process, optimality and computational time for the PSM-PAVA and traditional CC\&G algorithm for the 2S-ARO problem (Maximum iteration =600; $\Gamma=6$).}
    \label{PSMvsCCG}
\end{figure}

\begin{table}[t]
  \centering
  \caption{Performance comparison for PSM-PAVA and CC\&G ($\Gamma=6, \varepsilon_{\mathrm{rel}}=10^{-8}$).}
  \label{tab:psm_ccg_perf}
  \scriptsize
  \setlength{\tabcolsep}{2pt}
  \renewcommand{\arraystretch}{1.05}
  \sisetup{detect-weight=true,detect-inline-weight=math,table-number-alignment=center}

  \begin{tabular}{
    S[table-format=3.0]          % Scenario
    S[table-format=5.2]          % 2S-ARO CC&G
    S[table-format=3.2]          % Stochastic LP PSM
    S[table-format=5.2]          % Stochastic LP CC&G
    S[table-format=3.0]          % Speedup
    S[table-format=1.6]          % Per-iter PSM
    S[table-format=4.2]          % Per-iter CC&G
    S[table-format=4.3]          % Obj PSM
    S[table-format=4.3]          % Obj CC&G
    S[table-format=1.4]          % Diff (%)
  }
  \toprule
  \multicolumn{1}{c}{\multirow{2}{*}{Number of scen.}} &
  \multicolumn{1}{c}{2S-ARO problem (s)} &
  \multicolumn{2}{c}{Stochastic LP problem (s)} &
  \multicolumn{1}{c}{\multirow{2}{*}{Speedup}} &
  \multicolumn{2}{c}{Per-iter. (s)} &
  \multicolumn{2}{c}{Obj (\$)} &
  \multicolumn{1}{c}{\multirow{2}{*}{Diff (\%)}} \\
  \cmidrule(lr){3-4}\cmidrule(lr){6-7}\cmidrule(lr){8-9}
  \multicolumn{1}{c}{} &
  \multicolumn{1}{c}{CC\&G} &
  \multicolumn{1}{c}{PSM-PAVA} & \multicolumn{1}{c}{CC\&G} &
  \multicolumn{1}{c}{} &
  \multicolumn{1}{c}{PSM-PAVA} & \multicolumn{1}{c}{CC\&G} &
  \multicolumn{1}{c}{PSM-PAVA} & \multicolumn{1}{c}{CC\&G} &
  \multicolumn{1}{c}{} \\
  \midrule
   25  & 358.04  &
         \multicolumn{1}{c}{\makecell[c]{2.94\\($\times$37)}}   & 109.02  &
         \multicolumn{1}{c}{$\times$122} &
         \multicolumn{1}{c}{\makecell[c]{0.0282\\($\times$2116)}} & 59.67   &
         1891.26 & 1891.26 & 0.0000 \\
  100  & 1432.94 &
         \multicolumn{1}{c}{\makecell[c]{7.26\\($\times$99)}}   & 718.70 &
         \multicolumn{1}{c}{$\times$197} &
         \multicolumn{1}{c}{\makecell[c]{0.0666\\($\times$3172)}} & 238.82  &
         1668.38 & 1668.40 & 0.0012 \\
  250  & 4359.49 &
         \multicolumn{1}{c}{\makecell[c]{29.04\\($\times$88)}}  & 2562.35 &
         \multicolumn{1}{c}{$\times$150} &
         \multicolumn{1}{c}{\makecell[c]{0.269\\($\times$2701)}}  & 726.60  &
         1618.51 & 1618.47 & 0.0025 \\
  500  & 7392.92 &
         \multicolumn{1}{c}{\makecell[c]{36.30\\($\times$101)}}  & 3684.18 &
         \multicolumn{1}{c}{$\times$204} &
         \multicolumn{1}{c}{\makecell[c]{0.334\\($\times$3689)}} & 1232.15 &
         1608.02 & 1608.03 & 0.0006 \\
  \bottomrule
  \end{tabular}
    \vspace{0pt}
  \begin{minipage}{\linewidth}
  \footnotesize
  \textit{Notes:} ``2S-ARO problem (s)'' reports the total computation time of the full 2S-ARO model solved by CC\&G. ``Stochastic LP problem (s)'' reports the computation times when PSM-PAVA and CC\&G are both applied to the same two-stage stochastic LP, isolating algorithmic differences on a common problem instance. ``Speedup'' is the ratio of the 2S-ARO computation time of CC\&G to the stochastic-LP computation time of PSM-PAVA.
  \end{minipage}
\end{table}

\subsection{Offering Strategy Results}
\label{Offering strategy results}

The numerical results provided in this section serve three purposes: First, to clarify how the VPP determines its offering strategy in the day-ahead market at different hours. Second, 
to verify the arbitrage behavior of the energy storage unit within VPP under the algorithm of PSM-PAVA. 
Third,
to analyze the impact of the risk strategy on the power traded in the day-ahead market and the actual utility obtained by the VPP. To
accomplish this, two values of parameters $\Gamma =6,12$, i.e.,
two different risk strategies, are considered and compared. 

The day-ahead market offering strategies of VPP at time slots 1, 11, 12, and 22 under two conservatism levels ($\Gamma = 6$ and $\Gamma = 12$) is presents in Figure~\ref{Price-quantity-Pairs}. The temporal analysis reveals distinct offering behaviors: at $\Gamma = 6$, the VPP submits purchase bids within $[1.0426, 1.0463]$~MW during time slot 1, transitioning to sales offers in the range $[0.0045, 0.0055]$~MW at time slot 12. This offering behavior corresponds to the exploitation of price differentials for VPP captured by the Markov chain transition probabilities, whereby the expected price at time slot 12 exceeds that of time slot 1. As the conservatism parameter $\Gamma$ decreases, the offering curves consistently shift leftward and VPP becomes more willing to sell and less inclined to purchase power, reflecting its increased risk tolerance. The VPP behaves exactly like an arbitrageur: purchasing cheap during off-peak hours and selling high when prices increase. Specifically, relaxing the conservatism constraint from $\Gamma = 12$ to $\Gamma = 6$ yields increased sales volumes during high-price periods and reduced procurement during low-price intervals. Reduced conservatism parameters yield higher offering quantities as the VPP more actively exploits intertemporal price differentials. 
% This behavior emerges because relaxed uncertainty bounds enable the VPP to leverage its storage capacity more effectively for temporal arbitrage.

\begin{figure}[!t]
\centering
\includegraphics[width=\linewidth]{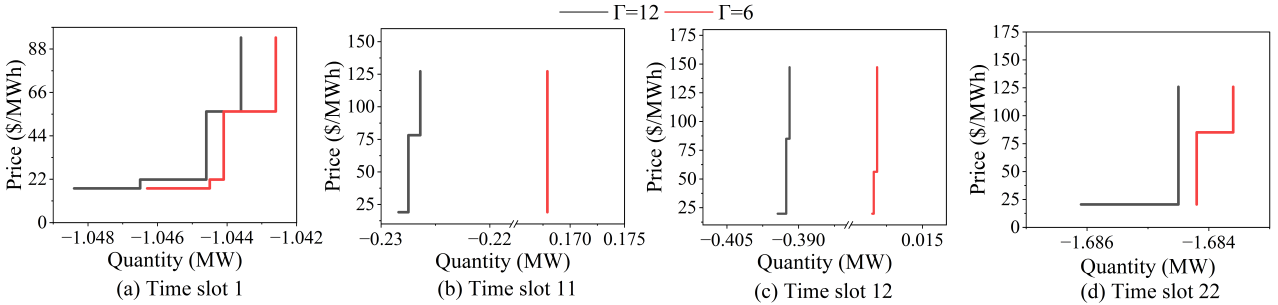} % 请将fig1替换为实际的图像文件名
\caption{Offering curves in the day-ahead market for time slots 1, 11, 12, and 22.}
\label{Price-quantity-Pairs}
\end{figure}

Figure~\ref{ES-Price} depicts the energy storage unit power exchange in the day-ahead market throughout the operational day under six day-ahead price scenarios selected from the 25 sampled price trajectories. 
The results demonstrate that the energy storage unit effectively exploits day-ahead price fluctuations for arbitrage operations, charging during low-price periods and discharging when prices are relatively higher.

\begin{figure}[!t]
\centering
\includegraphics[width=6.2 in]{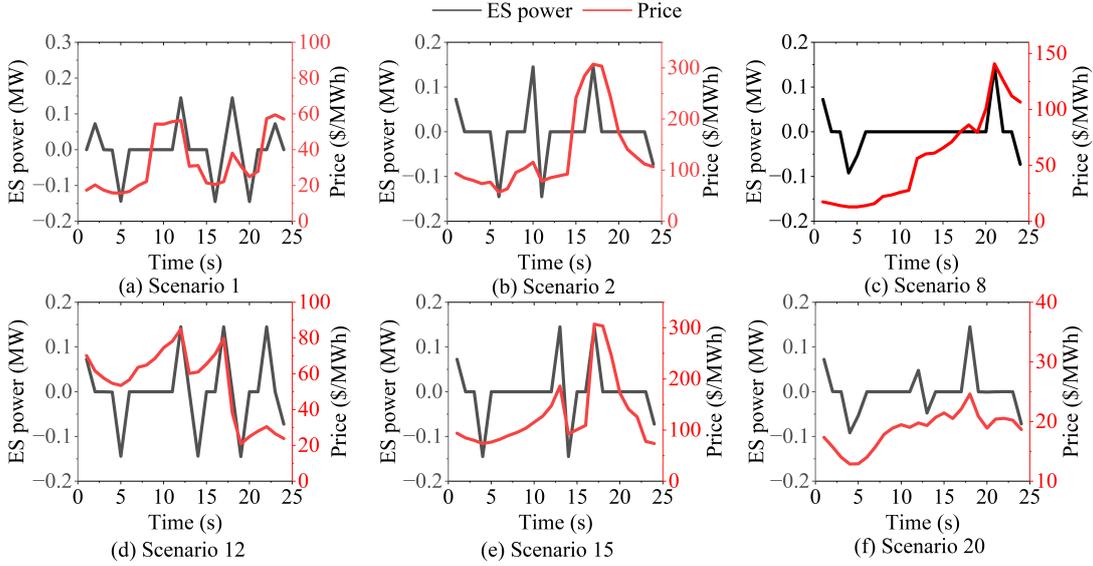} % 请将fig1替换为实际的图像文件名
\caption{Energy storage unit dispatch decisions within the VPP under six price scenarios.}
\label{ES-Price}
\end{figure}

\subsection{Out-of-sample Analysis}
\label{Out-of-sample Analysis}

To further evaluate the performance of the proposed day-ahead offering strategy, out-of-sample analyses are conducted. The evaluation employs scenario sets ranging from 25 to 10,000 day-ahead price realizations. The expected profit achieved by the VPP through the PSM-PAVA algorithm is computed for each scenario realization. Figure~\ref{Iteration} reports the average profit and its variance across strategies as $|\Omega|$ increases, where each data point is averaged over five independently drawn scenario sets of the same size. The coefficient of variation falls below 0.5\% for $|\Omega| \geq 500$, confirming that 500 sampled scenarios yield statistically stable solutions. The iterative objective values also stabilize at this point, indicating that near-optimal convergence is attained with 500 price scenarios. The analysis reveals that the average profit at $\Gamma=6$ is higher than that at 12.

\begin{figure}[!t]
\centering
\includegraphics[width=3.8 in]{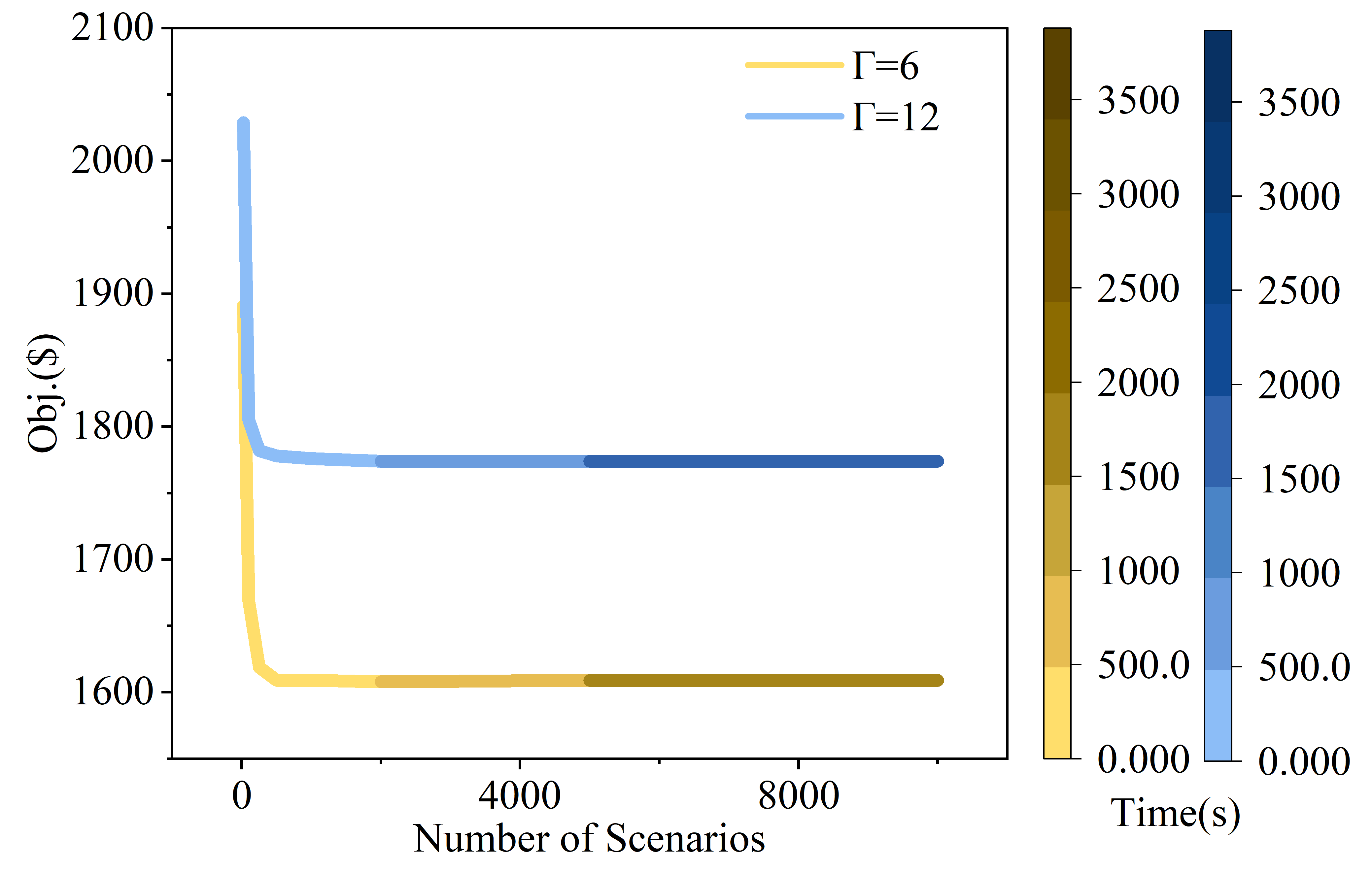} % 请将fig1替换为实际的图像文件名
\caption{Evolution of the average profit with the number of sampled price scenarios.}
\label{Iteration}
\end{figure}

\begin{figure}[!t]
\centering
\includegraphics[width=5.2 in]{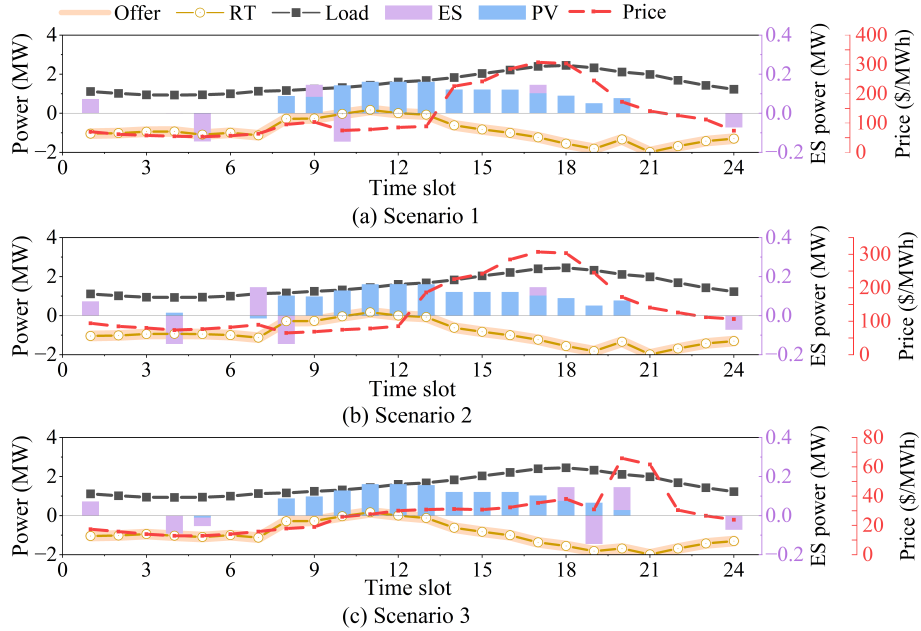} % 请将fig1替换为实际的图像文件名
\caption{Economic dispatch decisions of VPP under three price scenarios (``RT'': the operational power).}
\label{ClearED}
\end{figure}

To evaluate the practical performance of the PSM-PAVA algorithm for the two-stage stochastic LP problem, an operational economic dispatch case study is implemented to simulate real-world conditions. Historical day-ahead price and PV generation data are used to represent uncertainty realizations. Committed offers under the day-ahead offering strategy are settled at the day-ahead price. The economic dispatch is performed using these offer quantities alongside realized DER outputs. Figure~\ref{ClearED} presents the results of the economic dispatch under three random day-ahead price trajectories and PV uncertainty realizations, all collected from historical data. The results show that PV generation is partially curtailed to meet the commitment offer. No power mismatch occurs throughout the time horizon, and the energy storage unit dispatch arbitrage aligns with the prevailing day-ahead price fluctuations. The offering results also indicate that there is no arbitrage behavior for VPP to arbitrage from the two-settlement markets, which aligns well with the market rule of Remark \ref{remark real time price}. The energy storage unit operates like a financial arbitrageur during the operational day, which aligns with the common wisdom that the energy storage units are most effective in cost reduction during real-time operations.

% \vspace{-0.2in}
\section{Conclusion}
\label{conclusion}

% This study addresses the two-stage stochastic ARO problem for VPP participation in the day-ahead electricity market. First, under the adopted modeling assumptions, the original two-stage stochastic ARO problem can be reformulated as a two-stage stochastic LP problem, thereby reducing computational complexity. Second, the proposed greedy algorithm constructs an exact recourse oracle for the second-stage LP problem by exploiting the nested resource allocation structure. Third, extensive numerical experiments demonstrate the applicability, computational efficiency, and solution quality of the PSM-PAVA algorithm. The PSM-PAVA algorithm achieves two orders of magnitude total speedup over CC\&G, decomposing into around a $2\times$ gain from problem reformulation and around a $70\times$ gain from the inner-approximation algorithmic design. The proposed second-stage reformulation and the PSM-PAVA algorithm facilitate the efficient solution of the VPP day-ahead offering problem with clearly shorter computational times.

This study addresses the two-stage stochastic ARO problem for VPP participation in the day-ahead electricity market. Under the adopted modeling assumptions, we show that the original two-stage stochastic ARO model can be equivalently reformulated as a two-stage stochastic LP problem, which substantially improves tractability while preserving the models of the offering problem. In particular, the reformulation converts the original nonconvex robust second-stage problem into a structured recourse problem with a nested resource allocation structure amenable to efficient algorithmic development.

Building on the reformulation, we develop an exact greedy oracle for the second-stage LP problem and integrate it into a projected subgradient framework for solving the overall offering problem. The role of the greedy oracle is not only to evaluate the scenario-wise recourse function efficiently, but also to support the outer algorithm with the exact recourse information needed for subgradient construction. This leads to the proposed PSM-PAVA algorithm, which avoids the scalability issue of CC\&G-type approaches and instead maintains a lightweight first-order iteration structure.

Extensive numerical experiments based on real-world data demonstrate the practical value of the proposed framework. The greedy oracle consistently attains the same solution quality as a commercial LP solver while reducing the computational cost of the second-stage problem. At the full problem level, the PSM-PAVA algorithm achieves about two orders of magnitude total speedup over CC\&G, which can be interpreted as the combined effect of two sources: approximately a $2\times$ gain from the second-stage reformulation itself and approximately a $70\times$ gain from the inner-approximated first-order algorithmic design. These results show that the proposed framework improves computational efficiency without sacrificing solution quality.

Overall, the paper provides an efficient solution algorithmic framework for VPP day-ahead offering under coupled price and renewable uncertainty. The main contribution is not only the reformulation of the original two-stage stochastic ARO problem, but also the development of an exact and computationally efficient recourse oracle that makes repeated scenario-wise evaluation practical within a large-scale first-order algorithm. The proposed second-stage reformulation and the PSM-PAVA algorithm facilitate the efficient solution of the VPP day-ahead offering problem by exploiting the problem structure, and it provide a promising path for scalable market participation models of DER aggregations.

\section*{Acknowledgements}
This work was supported by the U.S. Department of Energy Solar Energy Technologies Office under Grant DE-EE0011374.

\clearpage

\appendix 
\numberwithin{equation}{section}

\section{Proof of Proposition \ref{NoCDischarge}} \label{sect:app:NSCD}

\begin{proof}
Suppose there is an optimal solution to the second-stage problem \eqref{eq:cost-to-go function} without constraint \eqref{eq:ES_cc} with positive $\bigl(p_{t}^{\mathrm{ES,ch}}\bigr)^*$ and $\bigl(p_{t}^{\mathrm{ES,dis}}\bigr)^*$ and the associated SOC $e^*$. Since both charging and discharging powers are positive, there exists $\epsilon > 0$ such that $p_{t}^{\mathrm{ES,ch}} = \bigl(p_{t}^{\mathrm{ES,ch}}\bigr)^* - \epsilon$ and $p_{t}^{\mathrm{ES,dis}} = \bigl(p_{t}^{\mathrm{ES,dis}}\bigr)^* - \eta^{\mathrm{ch}}\eta^{\mathrm{dis}}\epsilon$ are still nonnegative, and the original $e^*$ is preserved, which in turn preserves the feasibility of the updated solution. For the new feasible solution, the energy storage unit discharging power $p_{t}^{\mathrm{ES,dis}}$ decreases, while the net power output $P_{t}^\ES$ increases. By checking the objective function of the second-stage problem \eqref{eq:Pi_RT}, \eqref{eq:RT_cost}, we see it is nondecreasing in $p_{t}^{\mathrm{ES,dis}}$ and decreasing in $P_{t}^\ES$; therefore, the new solution has a strictly lower objective value. We have thus shown that a solution with positive charging and discharging power $p_{t}^{\mathrm{ES,ch}}$ and $p_{t}^{\mathrm{ES,dis}}$ cannot be an optimal solution.
\end{proof}

\section{Proof of Proposition \ref{DecouplingCons}}\label{sect:app:DPC}

\begin{proof}
Fix a first-stage decision $p^{\DA}\in\mathcal X$ and a price scenario $\omega\in\Omega$.
By Proposition~\eqref{NoCDischarge}, the complementarity constraint
\eqref{eq:ES_cc} can be dropped without affecting optimality, so the second-stage problem can be analyzed through the reformulated problem \eqref{eq:reformulated_cost_to_go}.

% through the reformulated objective
% \eqref{eq:imbalance cost reformulated} together with constraints
% \eqref{eq:pv_bound} and \eqref{eq:ES_bounds}.

We first show that, for any fixed realization $\hat p^{\PV}\in\mathcal U$,
the optimal PV dispatch decision does not involve curtailment. From \eqref{eq:imbalance cost reformulated}, each affine piece of
$f_{t,\omega}(\Delta e_t, p_t^\PV, p_t^\DA)$ has slope with respect to $p_t^{\PV}$ equal to either $\tilde c_{t,\omega}^\PV$ or $\utilde c_{t,\omega}^\PV$.
By the assumption \eqref{eq:kappa_cond_prop}:
\begin{equation}
\kappa\le \lambda_{t,\omega}^{\DA}-c^{\PV},
\qquad \forall t\in\mathcal T,
\end{equation}
which implies
\begin{equation}
\label{eq:kappa_cond_prop_1}
\utilde c_{t,\omega}^\PV \leq
\tilde c_{t,\omega}^\PV \leq 0,  \qquad \forall t\in\mathcal T.
\end{equation}

Hence, every affine piece of $f_{t,\omega}(\Delta e_t, p_t^\PV, p_t^\DA)$ is nonincreasing in
$p_t^{\PV}$, and so is their pointwise maximum.
Since the only constraint involving $p_t^{\PV}$ is the box constraint \eqref{eq:pv_bound}, it follows that, for any realized $\hat p_t^{\PV}$ and feasible $\Delta e_t$, the optimal PV dispatch is achieved at its upper bound:
\begin{equation} \label{eq:appDPC_3}
p_t^{\PV,*}(\omega)=\hat p_t^{\PV}(\omega),\qquad \forall t\in\mathcal T.
\end{equation}

After eliminating the PV dispatch variable, for a given feasible $\Delta e_t$, the outer maximization problem becomes a linear optimization problem over $\mathcal U$ whose coefficients are induced by the active affine pieces in \eqref{eq:imbalance cost reformulated}. For each $t\in\mathcal{T}$, the corresponding effective coefficient is either $\tilde c_{t,\omega}^{\PV}$ or $\utilde c_{t,\omega}^{\PV}$. We next show that the ordering of these effective coefficients is independent of $\Delta e_t$.

Indeed, let $t, t' \in \mathcal{T}$ be such that $\lambda_{t,\omega}^\DA > \lambda_{t',\omega}^\DA$. By \eqref{eq:kappa_cond_prop}, we have $\lambda_{t,\omega}^\DA - \lambda_{t',\omega}^\DA \ge 2\kappa$, which means
\begin{equation}
    \tilde c_{t,\omega}^{\PV}-\utilde c_{t',\omega}^{\PV} = \bigl(c^{\PV}-\lambda_{t,\omega}^{\DA}+\kappa\bigr) - \bigl(c^{\PV}-\lambda_{t',\omega}^{\DA}-\kappa\bigr) = -(\lambda_{t,\omega}^{\DA}-\lambda_{t',\omega}^{\DA})+2\kappa \le 0.
\end{equation}
Therefore, every admissible effective coefficient at time $t$ is no larger than every admissible effective coefficient at time $t'$ whenever $\lambda_{t,\omega}^{\DA}\ge\lambda_{t',\omega}^{\DA}$. For time periods with equal day-ahead prices, the corresponding effective coefficients have the same ordering regardless of the active affine piece. It follows that the ranking of the effective coefficients is consistent with that of the negative day-ahead price $-\lambda_{t,\omega}^{\DA}$ and is independent of $\Delta e_t$.     

Since the PV uncertainty range is constant across time, the resulting outer problem is a continuous knapsack problem whose optimal solution depends solely on the ordering of its coefficients. Therefore, one worst-case PV availability profile can be chosen as an optimal solution of the surrogate problem as
\begin{equation} \label{eq:proof_linear_outer}
    \hat p^{\PV,*}(\omega)\in \arg\max_{\hat p^{\PV}\in\mathcal U} \sum_{t\in\mathcal T}-\lambda_{t,\omega}^{\DA}\hat p_t^{\PV}.
\end{equation}
Combining this with \eqref{eq:appDPC_3}, we conclude that the optimal PV dispatch in the second-stage problem satisfies
\begin{equation}
    p_t^{\PV,*}(\omega)=\hat p_t^{\PV,*}(\omega), \qquad \forall t\in\mathcal{T},
\end{equation}
which prove part (i).

Finally, once $p_t^{\PV,*}(\omega)$ is fixed, all terms involving PV availability
become parameters, and the second-stage problem reduces to an optimization over the
energy storage decision variables only, which is exactly the problem \eqref{eq:recourse_given_worst_pv}.
This proves part (ii).
\end{proof}

\section{Proof of Proposition \ref{thm:GDfeasibility}} \label{app:C}
\begin{proof}
    We prove the two inequality conditions \eqref{eq:SOCineq} and \eqref{eq:ubvineq} by induction. At initialization, 
    \begin{equation}
        e_{t}^{(0)} = e_0 + \sum\nolimits_{j=1}^t \Delta e_j^{(0)}
        = \sum\nolimits_{j=1}^t (\bar e_j - \ubar{e}_{j-1}) + e_0 
        = \bar{e}_t + \sum\nolimits_{j=1}^{t-1} (\bar e_j - \ubar e_j) \ge \ubar{e}_t.
    \end{equation}
    In addition, it is also easy to derive
    \begin{equation}
        e_t^{(0)} - \bar e_t = \sum\nolimits_{j=1}^{t-1} (\bar e_j - \ubar e_j) \quad \text{ and } \quad e_t^{(0)} - \ubar e_t = \sum\nolimits_{j=1}^{t} (\bar e_j - \ubar e_j).
    \end{equation}
    Therefore, for any $r<t$, the following inequality follows:
    \begin{equation}
        [e_r^{(0)} - \bar{e}_r]_+ = \sum\nolimits_{j = 1}^{r-1} (\bar e_j - \ubar e_j) \le \sum\nolimits_{j = 1}^{t} (\bar e_j - \ubar e_j) = e^{(0)}_t - \ubar e_t.
    \end{equation}
    Taking the maximum over $r < t$ gives
    \begin{equation}
        M_t^{(0)} \le e_t^{(0)} - \ubar e_t.
    \end{equation}

    We now assume the two inequalities \eqref{eq:SOCineq} and \eqref{eq:ubvineq} hold at iteration $k$, and consider iteration $k+1$. First of all, line \ref{line:na} does not change $\Delta e^{(k)}$, so the inequalities remain unchanged. Otherwise, Algorithm \ref{alg:local_search_algorithm} decreases exactly one component $\Delta e_t$ by some $\delta \ge 0$, and consequently, the induced SOC changes as 
    \begin{equation}
        e_j^{(k+1)} = 
        \begin{cases}
            \Delta e_j^{(k)} & \text{if } j < t \\
            \Delta e_j^{(k)} - \delta & \text{if } j \ge t
        \end{cases}
    \end{equation}
    Thus, for $j < t$, both the lower bound slack $e_j - \ubar{e}_j$ and the upper bound violation $[e_j - \bar{e}_j]_+$ are unchanged; for $j \ge t$, the lower bound slack decreases by $\delta$, while the upper bound violation decreases as much or settles at zero. Therefore, the only way the inequality \label{eq:Mineq} could fail is when an upper bound violation $r < t$ stays unchanged while the later lower bound slack at some $j \ge t$ is reduced. To prevent this from happening, $\delta$ should be chosen so that
    \begin{equation} \label{eq:d2cond}
        \delta \le \min_{t \le j \le T} \{e_j^{(k)} - \ubar e_j\} - \max_{1 \le j \le t-1} \{[e_j^{(k)} - \bar e_j]_+\}
    \end{equation}

    We consider two update cases, one in line \ref{line:update}, the other in line \ref{line:update_negslope}. First, we consider the update in line \ref{line:update}. Here, the line defines exactly $\delta_2 = \min_{t \le j \le T} \{e_j^{(k)} - \ubar e_j\} - \max_{1 \le j \le t-1} \{[e_j^{(k)} - \bar e_j]_+\}$, which is the right-hand side of \eqref{eq:d2cond}. Since $\delta^* \le \delta_2$, condition \eqref{eq:ubvineq} is preserved. In addition, the SOC lower bound \eqref{eq:SOCineq} is also preserved since $\delta_2 \le \min_{t \le j \le T} \{e_j^{(k)} - \ubar e_j\}$.
    
    Now consider the update in line \ref{line:update_negslope}, and suppose $\tau \coloneqq \min \{j: e_j^{(k)} - \bar e_j > 0 \}$. This means $[e_j^{(k)} - \bar e_j]_+ = 0$ for all $j < \tau$. The algorithm selects some $\tau' < \tau$ and defines $\delta_2$ as the smallest lower bound slack between time steps $\tau'$ and $\tau-1$, and $\delta_3$ as the upper bound violation at time step $\tau$ according to line \ref{line:d1d2d3}. Since $\delta^* \le \delta_2$, the SOC lower bounds are preserved for all time steps between $\tau'$ and $\tau-1$. For time steps $t \ge \tau$, the induction hypothesis gives
    \begin{equation}
        e_t^{(k)} - \ubar e_t \ge M_t^{(k)} \ge [e_\tau^{(k)} - \bar e_\tau]_+ = e_\tau^{(k)} - \bar e_\tau = \delta_3.
    \end{equation}
    Since $\delta^* \le \delta_3$, every time step $t \ge \tau$ also remains above its lower bound after the update. Regarding the upper bound violations, since there are no violations before $\tau$, and consistent SOC decrease does not affect \eqref{eq:ubvineq}. 

    Thus, both inequalities \eqref{eq:SOCineq} and \eqref{eq:ubvineq} are preserved for every iteration $k$.

    Now we show the algorithm must reach feasibility eventually. Since \eqref{eq:SOCineq} always holds, the potential infeasibility has to come from the upper bound violation. The algorithm returns either when the solution is feasible (line \ref{line:early_return}), or when $\mathcal{A}$ is empty. We may ignore the removal of active segment in line \ref{line:d1remove1} and line \ref{line:d1remove2}, as they never remove the last active segment from a time step (there is no further breakpoint). We are left with three other segment removal steps in lines \ref{line:d2remove1}, \ref{line:d2remove2}, and \ref{line:d2remove3}. Line \eqref{line:d2remove1} does not remove the segments of a time step with upper bound violation. Line \ref{line:d2remove2} removes active segments for all time steps before one with binding lower bound constraint. By \eqref{eq:ubvineq}, we know these do not include time steps with upper bound violations. Finally, line \eqref{line:d2remove3} does not remove the active segments in time step $\utilde \tau$, who has upper bound violation. In summary, the algorithm does not return when there is upper bound violation since $\mathcal{A} \ne \varnothing$ in this case, and since the algorithm returns in finite iterations, it returns with a feasible solution.
\end{proof}

\section{Proof of Proposition \ref{thm:GDoptimality}} \label{app:D}
\begin{proof}
    We prove optimality via an equivalent reformulation as a separable resource allocation problem with nested cumulative constraints. Specifically, by defining $d \coloneqq \Delta e^{(0)} - \Delta e$, Problem \eqref{eq:2ndcompact} can be reformulated as
    \begin{subequations}
    \begin{align*}
        \max_{d} \quad & \sum_{t=1}^T \psi_t(d_t) \\
        \text{s.t.} \quad & d_t \ge 0, && t\in \mathcal{T} \\
        & \sum_{j=0}^{t-1} (\bar e_j - \ubar e_j) \le \sum_{j=1}^t d_j \le \sum_{j=0}^t (\bar e_j - \ubar e_j), &&  t \in \mathcal{T} \setminus\{T\}, \\
        & \sum_{j=1}^T d_j = \sum_{j=0}^{T-1} (\bar e_j - \ubar e_j),
    \end{align*}
    \end{subequations}
    where 
    \begin{equation}
        \psi_t(d_t) = f_{t,\omega}(\Delta e_t^{(0)}) -f_{t,\omega} (\Delta e_t^{(0)} - d_t).
    \end{equation}
    Since each $f_{t,\omega}$ is convex piecewise linear, each $\psi_t$ is concave piecewise linear. Moreover, increasing $d_t$ by $\delta$ is exactly equivalent to decreasing $\Delta e_t$ by $\delta$. Therefore, after the reformulation, Problem \eqref{eq:2ndcompact} becomes a standard separable resource allocation problem with nested cumulative constraints in the newly defined $d$ variable space. The algorithm can then be viewed as a specialized implementation of the classical greedy marginal allocation paradigm for this problem class \cite{federgruen1986greedy}, where the algorithm chooses the best available marginal segment and increases the corresponding $d_t$ until one of the following three happens:
    \begin{itemize}
        \item The current affine segment is exhausted; 
        \item An upper bound constraint becomes binding; and 
        \item A lower bound constraint becomes binding. 
    \end{itemize}
    The three scenarios correspond to the roles of $\delta_1, \delta_2$, and $\delta_3$ in Algorithm \ref{alg:local_search_algorithm}. 
\end{proof}

\newpage
\bibliographystyle{elsarticle-harv} 
\bibliography{Ref/references.bib} 

%% else use the following coding to input the bibitems directly in the
%% TeX file.

%% Refer following link for more details about bibliography and citations.
%% https://en.wikibooks.org/wiki/LaTeX/Bibliography_Management

% \begin{thebibliography}{00}

%% For authoryear reference style
%% \bibitem[Author(year)]{label}
%% Text of bibliographic item

% \bibitem[Lamport(1994)]{lamport94}
%   Leslie Lamport,
%   \textit{\LaTeX: a document preparation system},
%   Addison Wesley, Massachusetts,
%   2nd edition,
%   1994.

% \end{thebibliography}

\end{document}

%% file: section_formulation_revised_MWQ.tex
\section{Problem Statement and Optimization Formulation}
\label{sect:formulation}
In this section, we describe the day-ahead offering problem of a price-taking VPP and present the corresponding two-stage stochastic ARO formulation. 

% \subsection{Problem Statement}
\subsection{Problem Setup and Market Setting}

\label{subsec:decision_model}
We model a utility-operated VPP that aggregates a solar PV unit, an energy storage unit, and an aggregated load over a day-ahead horizon $\mathcal{T} = \{1, 2, \ldots, T \}$ (with $T = 24$ in our case studies). Their power outputs at time $t \in \mathcal{T}$ are denoted by $p_t^{\mathrm{PV}}$, $p_t^{\mathrm{ES}}$, and $p_t^{\ell}$, respectively\footnote{We model the system load as an aggregated demand. Aggregated loads exhibit stable and well‑forecasted temporal patterns in typical operational horizons, so the forecasted load profile is used as deterministic input. While load uncertainty can be modeled through budgeted uncertainty sets, doing so would preserve the same robust optimization structure adopted in this study and would not change the qualitative conclusions.}. 
% The power output of the VPP is therefore 
% \begin{equation}
%     p_t = p_t^{\mathrm{PV}} + p_t^{\mathrm{ES}} + p_t^{\ell}.
% \end{equation}
The VPP participates in the day-ahead electricity market, in which it submits a stepwise offer curve (price-quantity pairs) for each hour $ t \in \mathcal{T}$. The system operator clears the day-ahead market after receiving offer curves from all market participants and determines the realized day-ahead clearing price and the accepted (committed) day-ahead quantity. 

During the operational (so-called real-time in the two-settlement market) stage, the VPP dispatches its resources (PV and storage) to realize the committed quantity and settles any power imbalance with the system operator. The profit of VPP, therefore, is determined by (i) day-ahead settlement revenue, (ii) real-time imbalance settlement, and (iii) DER operating costs.
\begin{remark} 
\label{remark real time price}
In U.S. ISO/RTO two-settlement markets, real-time settlements apply to the deviation between metered real-time quantities and the day-ahead commitment, priced at the real-time LMP (see, e.g., MISO BPM-005 in \cite{miso_bpm005_2025}; PJM settlements documentation in \cite{pjm_customer_guide_billing_2025}). In addition, to eliminate the arbitrage behavior from two-settlement markets, some markets impose penalties for uninstructed deviations beyond tolerance bands (e.g., CAISO Tariff Section 11 on Uninstructed Deviation Penalties \cite{caiso_tariff_section11_2025}). Consistent with this settlement logic, we adopt a stylized ``incentive-penalty'' mechanism that links imbalance charges to the day-ahead price, so that uninstructed deviations are not economically rewarded. 
\end{remark}

\subsection{Device Modeling}
\label{sect:device_modeling}

This subsection describes the operational models of the PV unit, energy storage unit, and aggregated load. These models define the feasible dispatch region in real-time operation.

\subsubsection{Aggregated Load and PV Unit Model}
At each hour $t\in\mathcal{T}$, the aggregated load $p_t^{\ell}$ is assumed known at the day-ahead decision time. The PV dispatch decision $p_t^{\mathrm{PV}}$ is curtailable and constrained by the available PV power
$\hat p_t^{\mathrm{PV}}$:
\begin{equation}
\label{eq:pv_bound}
0 \le p_t^{\mathrm{PV}} \le \hat p_t^{\mathrm{PV}}, \qquad \forall t\in\mathcal{T}.
\end{equation}
Here, $\hat p_t^{\mathrm{PV}}$ will be treated as an uncertain parameter to characterize the intermittency of the unrealized PV power generation and modeled in
Section~\ref{subsec:uncertainty_modeling}.

\subsubsection{Energy Storage Model} \label{sect:ES}

We model the energy storage unit using standard power and energy constraints. Let $p_t^{\ES,\mathrm{ch}}$ and $p_t^{\ES,\mathrm{dis}}$ denote the charging and discharging power at hour $t \in \mathcal{T}$, respectively. The charging and discharging powers satisfy
\begin{equation} \label{eq:es_power_bounds}
    0 \le p_t^{\mathrm{ES,ch}} \le \bar p^{\mathrm{ES}},\qquad
0 \le p_t^{\mathrm{ES,dis}} \le \bar p^{\mathrm{ES}},\qquad
p_t^{\mathrm{ES}} = -p_t^{\mathrm{ES,ch}} + p_t^{\mathrm{ES,dis}},\qquad
\forall t\in\mathcal{T}.
\end{equation}

Simultaneous charging and discharging is not allowed in practice, which is enforced through the complementarity constraint:
\begin{equation}
\label{eq:ES_cc}
    p_t^{\ES,\mathrm{ch}} \perp p_t^{\ES,\mathrm{dis}}, \quad \forall t\in\mathcal{T}.
\end{equation}
The above constraint is nonconvex, for which we introduce a tractable reformulation in the sequel.

Let $e_t$ be the state of charge (SOC) of the energy storage unit. The SOC evolves according to the charging/discharging power through
\begin{equation} \label{eq:SOC_dynamics}
    e_t = e_{t-1} + \eta^{\mathrm{ch}} p_t^{\mathrm{ES,ch}}
-  p_t^{\mathrm{ES,dis}} / \eta^{\mathrm{dis}}, \qquad \forall t\in\mathcal{T},
\end{equation}
where $\eta^{\mathrm{ch}}\in(0,1]$ and $\eta^{\mathrm{dis}}\in(0,1]$ are the charging and discharging efficiencies, respectively. For notational simplicity, the time-step length $\Delta t$ is taken as unity and is omitted throughout the remainder of the paper.

The SOC is always bounded:
\begin{equation} \label{eq:soc_bounds}
\ubar{e} \le e_t \le \bar{e}, \quad \forall t\in\mathcal{T}.
\end{equation}

In addition, we also require that the terminal SOC is equal to the initial SOC:
\begin{equation} \label{eq:terminal_condition}
    e_T = e_0.
\end{equation}

\subsubsection{Power Balance and Imbalance Power}

The real-time net injection of the VPP is the aggregated power output of the PV, energy storage, and aggregated load:
\begin{equation} \label{eq:aggregate_power}
    p_t^{\mathrm{agg}} = p_t^{\mathrm{PV}} + p_t^{\mathrm{ES}} - p_t^{\ell}.
\end{equation}

We denote the day-ahead committed net injection at hour $t\in\mathcal{T}$ by $p^\DA_t$, and define $p_t^{\mathrm{mis}}$ as the mismatch between the real-time net injection and the day-ahead commitment.
\begin{equation} \label{eq:imbalance}
     p_t^{\mathrm{mis}} = p_t^{\mathrm{agg}} - p^\DA_t.
\end{equation}

Under the convention that positive power denotes injection, $p_t^{\mathrm{mis}}<0$ indicates an injection shortfall (net import), while $p_t^{\mathrm{mis}}>0$ indicates an injection surplus (net export). The imbalance settlement cost model is discussed in Section~\ref{sect:optim_form}.

\subsection{Uncertainty Modeling}
\label{subsec:uncertainty_modeling}

Among the three device types (PV, energy storage, and aggregated load), the storage dispatch is controllable. We assume the aggregated load profile $\{p_t^\ell\}_{t\in\mathcal{T}}$ is perfectly known at the day-ahead decision time, and we explicitly model two sources of uncertainty: (i) PV availability and (ii) day-ahead prices. To capture their heterogeneous characteristics, we adopt a hybrid approach: PV availability is described by a budgeted uncertainty set to reflect its pronounced intermittency and limited predictability, while day-ahead prices are cleared by the ISO from multi-period bids/offers and thus exhibit sequential (Markovian) stochastic dynamics across time. This hybrid stochastic-robust construction combines the temporal dependence of day-ahead prices with worst-case protection against PV forecast errors.

\subsubsection{PV Availability Uncertainty via a Static Budgeted Set} \label{sect:pv_uncertainty}

We model uncertainty in PV availability (the maximum PV injection) by a budgeted uncertainty set \cite{baringo_stochastic_2018}. Let $\hat{p}^\PV := [\hat p^{\mathrm{PV}}_t]_{t\in\mathcal{T}} \in \mathbb{R}^{|\mathcal{T}|}$ denote the vector of uncertain PV availability that appears in the PV constraint in \eqref{eq:pv_bound}. We model its uncertainty set as
\begin{equation}
\label{PV uncertainty}
    \mathcal{U} = \Bigl\{ \hat{p}^\PV \in \mathbb{R}^{|\mathcal{T}|} :\; \hat{p}^\PV_t = p^{\PV,0}_t + \frac{\bar{p}_t^\PV - \ubar{p}_t^\PV}{2} \xi_t,\;\; |\xi_t| \le 1, \;\;\sum\nolimits_{t\in\mathcal{T}} |\xi_t| \le \Gamma, \quad \forall t \in \mathcal{T} \Bigr\},
\end{equation}
where $\bar{p}^\PV_t$ and $\ubar{p}^\PV_t$ are the upper and lower bounds of the PV availability forecast at $t \in \mathcal{T}$, respectively, with the nominal PV availability $p^{\PV,0}_t \coloneqq (\bar{p}^\PV_t + \ubar{p}^\PV_t)/2$. The budget parameter $\Gamma\in[0,|\mathcal{T}|]$ limits the aggregate deviation across the horizon. The nominal forecast $p^{\mathrm{PV},0}_t$ and bounds $(\bar{p}_t^\PV, \ubar{p}_t^\PV)$ can be obtained from historical data and forecasting tools (e.g., LSTM-based models). Since forecasting is not the focus of this work, the details are omitted for brevity. We assume the accuracy of the PV forecast is similar across the entire time horizon, so $\bar p_t^\PV - \ubar p_t^\PV$ is a constant for all $t \in \mathcal{T}$.

\subsubsection{Day-Ahead Price Scenarios via a Markov Process} 
\label{sect:Markov_Process}

We model day-ahead price uncertainty using a first-order, time-inhomogeneous Markov process with $T$ stages (hours) and $N$ discrete price states per stage \cite{zheng_arbitraging_2022}. Let $\mathcal{S}:=\{1,2,\ldots,N\}$ denote the set of day-ahead price states. For each hour $t\in\mathcal{T}$, we discretize historical day-ahead prices into $N$ intervals $[\ubar{\pi}_{t,s},\bar{\pi}_{t,s})$ indexed by $s\in\mathcal{S}$, and represent state $s$ by a representative price $\pi_{t,s}$. Thus, the day-ahead clearing price at hour $t$ is modeled as taking the value $\pi_{t,s}$ when the process is in state $s$.

Let $S_t \in\mathcal{S}$ denote the day-ahead price state at hour $t$ and $\vartheta_{s,s',t} := \mathbb{P}(S_{t+1}=s' \mid S_t=s)$ denote the transition probability from state $s$ at hour $t$ to state $s'$ at hour $t{+}1$. Let $\mathcal{D}$ denote the set of historical days used for training. Using historical day-ahead price data $\{\lambda^{\DA}_{t,d}\}_{t\in\mathcal{T},d\in\mathcal{D}}$, we estimate these probabilities by counting the observed state transitions in the historical data:
\begin{equation}
\label{eq:hourly_transition_probabilities}
    \vartheta_{s,s',t} = \frac{ \sum_{d\in\mathcal{D}} \mathbb{I}\!\left\{\lambda^{\DA}_{t,d}\in[\ubar{\pi}_{t,s},\bar{\pi}_{t,s})\right\} \mathbb{I}\!\left\{\lambda^{\DA}_{t+1,d}\in[\ubar{\pi}_{t+1,s'},\bar{\pi}_{t+1,s'})\right\}
    }{
    \sum_{d\in\mathcal{D}} \mathbb{I}\!\left\{\lambda^{\DA}_{t,d}\in[\ubar{\pi}_{t,s},\bar{\pi}_{t,s})\right\} }, \qquad \forall s,s'\in\mathcal{S},\ t\in\mathcal{T}\setminus \{T\},
\end{equation}
where $\mathbb{I}\{\cdot\}$ is the indicator function.
Monte Carlo sampling of this Markov model generates a finite set of price trajectories $\Omega=\{1,2,\ldots,W\}$. Each trajectory $\omega\in\Omega$ corresponds to a sampled state sequence $\{s_t(\omega)\}_{t\in\mathcal{T}}$ and a price sequence
\begin{equation}
\lambda^{\DA}_{t,\omega} := \pi_{t,s_t(\omega)}, \qquad \forall t\in\mathcal{T},\ \omega\in\Omega.
\end{equation}
These day-ahead price scenarios enter the first-stage settlement term in the 2S-ARO formulation. Real-time prices are not modeled through an additional stochastic process; instead, follow the market rules in Remark \ref{remark real time price}, real-time settlements are captured by the incentive-penalty imbalance mechanism linked to the realized day-ahead clearing price, as detailed in the sequel.

\subsection{Optimization Formulation} \label{sect:optim_form}

We now formulate the VPP day-ahead offering problem as a two-stage stochastic ARO model, where the day-ahead price uncertainty is represented by scenarios $\omega \in \Omega = \{1, 2, \ldots, W\}$ generated from the Markov model in Section \ref{sect:Markov_Process}. For each scenario $\omega$, the Markov model gives the scenario probability $\rho_\omega$ and the realized day-ahead price trajectory $\{\lambda^\DA_{t,\omega}\}_{t\in\mathcal{T}}$.

\paragraph{First-stage decision variables: the day-ahead offer quantities}

The VPP submits a stepwise day-ahead offer curve for each $t\in\mathcal{T}$, specified by the offered quantities at price states $s\in\mathcal{S}$. Let 
\begin{equation}
    x := p^\DA = \bigl\{ p^\DA_{t,s} \bigr\}_{t\in\mathcal{T},s\in\mathcal{S}}
\end{equation}
be the first-stage decision vector. Each component $p^\DA_{t,s}$ is the offered net injection at
hour $t$ if the DA clearing price falls in state $s$. Nonanticipativity is implicitly enforced by this $(t,s)$ indexing: all scenarios that realize the same price at hour $t$ share the same offered quantity $p^\DA_{t,s}$. Note that the dimension of $x$ is $|\mathcal{S}||\mathcal{T}|$, which is independent of the number of sampled scenarios. The feasible set $x\in\mathcal{X}$ enforces operational limits and the monotonicity of the offer curve (i.e., higher prices correspond to higher offered quantities):
\begin{equation} \label{first_stage_FR}
    \mathcal{X} = \bigl\{ p^\DA \in \mathbb{R}^{|\mathcal{S}||\mathcal{T}|} :  
    \ubar{p}^{\DA} \leq p_{t,s}^{\DA} \leq \bar{p}^{\DA},  \quad p_{t,s}^{\DA} \leq p_{t,s'}^{\DA} \text{ if } \pi_{t,s} \leq \pi_{t,s'}, \quad \forall t \in \mathcal{T}, s,s'\in\mathcal{S} \bigr\}, 
\end{equation}
where $\ubar{p}^{\DA}$ and $\bar{p}^{\DA}$ denote the minimum and maximum offering quantities, respectively.

\paragraph{Second-stage decision variables: the real-time dispatch signals}

After the day-ahead market clears and the PV availability uncertainty is realized, the VPP dispatches its DERs in real time. PV availability is modeled by the budgeted uncertainty set $\mathcal{U}$ in Section \ref{sect:pv_uncertainty}. Given a realization $\hat{p}^{\mathrm{PV}}\in\mathcal{U}$, the second-stage dispatch variables include
\begin{equation}
    y := \bigl\{ p_t^{\mathrm{PV}},\, p_t^{\mathrm{ES,ch}},\, p_t^{\mathrm{ES,dis}},\, e_t,\, p_t^{\mathrm{mis}} \bigr\}_{t\in\mathcal{T}}.
\end{equation}

For a fixed day-ahead schedule and PV availability realization, the feasible region of the real-time DER dispatch is
\begin{equation} \label{eq:Y_DER}
    \mathcal{Y}\bigl(p^{\DA},\hat{p}^{\mathrm{PV}}\bigr) := \mathcal{Y}^{\mathrm{PV}}\bigl(\hat{p}^{\mathrm{PV}}\bigr) \;\cap\; \mathcal{Y}^{\mathrm{ES}} \;\cap\; \mathcal{Y}^{\mathrm{mis}}\bigl(p^{\DA}\bigr),
\end{equation}
where $\mathcal{Y}^{\PV}(\hat{p}^{\PV})$ denotes the set of $y$ feasible to the PV feasibility constraint \eqref{eq:pv_bound}. Similarly, $\mathcal{Y}^{\ES}$ encodes the energy storage constraints \eqref{eq:es_power_bounds}--\eqref{eq:terminal_condition}, and $\mathcal{Y}^{\mathrm{mis}}(p^{\DA})$ enforces the power balance incorporating the mismatch between real-time DER dispatch and day-ahead commitment \eqref{eq:aggregate_power}--\eqref{eq:imbalance}.

\paragraph{Real-time cost model}
To comply with U.S. market rules (see Remark~\ref{remark real time price} in Section~\ref{subsec:decision_model}), we introduce the incentive-penalty imbalance settlement mechanism intended to discourage arbitrage between day-ahead and real-time settlements. Recall that
$p_t^{\mathrm{mis}} = p_t^{\mathrm{agg}} - p_t^\DA$, so $p_t^{\mathrm{mis}}<0$ indicates an injection
shortfall (net import) and $p_t^{\mathrm{mis}}>0$ indicates an injection surplus (net export). Under scenario $\omega$, we set two settlement rates linked to the realized day-ahead clearing price $\lambda^\DA_{t,\omega}$:
\begin{equation} \label{eq:imb_rates}
    \lambda^{\mathrm{imp}}_{t,\omega} := \lambda^\DA_{t,\omega} + \kappa, \qquad
    \lambda^{\mathrm{exp}}_{t,\omega} := \lambda^\DA_{t,\omega} - \kappa, \qquad 
    \forall t\in\mathcal{T},
\end{equation}
where $\kappa > 0$ is a reduced-form parameter representing the imbalance charges induced by the settlement rules in Remark \ref{remark real time price}. Thus, importing energy to cover a shortfall is priced above the day-ahead clearing price, while exporting surplus energy is credited below the day-ahead clearing price. The resulting imbalance settlement cost is
\begin{equation} \label{eq:imb_cost_kappa}
    S\bigl(p_t^{\mathrm{mis}};\lambda^\DA_{t,\omega}\bigr)
    =
    \lambda^{\mathrm{imp}}_{t,\omega}\,[-p_t^{\mathrm{mis}}]_+ -
    \lambda^{\mathrm{exp}}_{t,\omega}\,[p_t^{\mathrm{mis}}]_+,
\end{equation}
where $[x]_+ := \max\{x,0\}$. For brevity, all terms involving $\Delta t$ that multiply the power variables in the objective function are omitted. The cost is positive under net import and negative under net export. This incentive-penalty imbalance settlement mechanism captures the fact that real-time deviations are settled at imbalance charges different from the day-ahead price and may face additional deviation penalties, and it guarantees that intentional deviations are not rewarded in the price-taking setting. As noted in \cite{kim_benefits_2021}, if deviation prices do not respond to day-ahead prices and the VPP has unrestricted flexibilities for the real-time operation, the offering problem can degenerate, and stochastic offers may provide no advantage over deterministic ones; \eqref{eq:imb_cost_kappa} mitigates this behavior.

For a given day-ahead price scenario $\omega$, the real-time cost is defined as the sum of the DER operating cost and the imbalance settlement cost over the time horizon $\mathcal{T}$:
\begin{equation} \label{eq:Pi_RT}
    \Phi^{\RT}(y;\omega)
    :=
    \sum_{t\in\mathcal{T}}
    \Bigl(
        c^\PV p_t^\PV + c^\ES p_t^{\ES,\mathrm{dis}} + S(p_t^{\mathrm{mis}};\lambda^\DA_{t,\omega})
    \Bigr),
\end{equation}
where $c^\PV$ denotes the marginal cost of PV and $c^\ES$ accounts for the battery degradation cost \cite{qin_role_2023}. The PV operating cost model follows the variable operations and maintenance energy (VOM-EN) pricing model \cite{caiso_tariff_section11_2025}.

\paragraph{The two-stage stochastic ARO problem}

For each price scenario $\omega\in\Omega$, the VPP earns day-ahead settlement revenue $\sum_{t\in\mathcal{T}} \lambda^\DA_{t,\omega} p^\DA_{t,s_t(\omega)}$ and incurs real-time operating and imbalance costs that depend on the dispatch decisions and PV availability. Since PV availability errors reflect forecast uncertainty that we wish to hedge against, we optimize against its worst-case realization in $\mathcal{U}$. On the other hand, since day-ahead prices exhibit stochastic variability with learnable temporal structure, we represent them by a scenario set $\Omega$ and optimize the expected profit. Given day-ahead price $p^\DA$, the net cost (the difference between real-time cost and day-ahead revenue) can be represented as
\begin{equation}
\Phi(p^{\mathrm{DA}})
\;:=\; \sum_{\omega \in \Omega} \rho_\omega \Bigl( -\underbrace{\sum_{t\in\mathcal{T}}\lambda^{\DA}_{t,\omega}\, p^{\DA}_{t,s_t(\omega)}}_{\text{day-ahead settlement}} + \underbrace{\max_{\hat{p}^{\mathrm{PV}}\in\mathcal{U}} \;\min_{y\in\mathcal{Y}(p^{\DA},\hat{p}^{\mathrm{PV}})} \; \Phi^{\RT}(y;\omega)}_{\text{worst-case real-time cost}} \Bigr)
\end{equation}
As mentioned, the first-stage decision $p^\DA$ (day-ahead offer) does not scale with the number of samples $|\Omega|$, while the second-stage decision $y$ does. The overall VPP offering problem is
\begin{equation}
\label{eq:2S-ARO}
    \min_{p^{\mathrm{DA}}\in\mathcal{X}} \;
\Phi(p^{\mathrm{DA}}).
\end{equation}

%% file: reformulation_cost-to-go.tex
\section{Reformulation of the Optimization Model}
\label{reformulated formulation}

In this section, we present the reformulation of the second-stage problem of the two-stage stochastic ARO problem in \eqref{eq:2S-ARO} and then decouple the max-min structure of the second-stage problem, which facilitates our subsequent development of efficient solution algorithms.

\subsection{Reformulation of the Second-Stage Problem}
\label{Reformulated cost-to-go}
For each price scenario $\omega\in \Omega$, the scenario-specific value function is defined as
\begin{equation}
    V_\omega(p^\DA) \coloneqq \max_{\hat{p}^{\mathrm{PV}}\in\mathcal{U}} \;\min_{y\in\mathcal{Y}(p^{\DA},\hat{p}^{\mathrm{PV}})} \; \Phi^{\RT}(y;\omega),
\end{equation}
which represents the worst-case optimal cost of the second-stage problem given the first-stage decision $p^{\mathrm{DA}}$. Then the 2S-ARO model \eqref{eq:2S-ARO} can be rewritten in the following form:
\begin{equation}
\label{eq: nested 2s-aro}
\min_{p^{\mathrm{DA}}\in \mathcal{X}} \sum_{\omega \in \Omega} \rho_\omega \Bigl( -\sum_{t\in\mathcal{T}}\lambda^{\DA}_{t,\omega}\, p^{\DA}_{t,s_t(\omega)} + V_\omega(p^\DA) \Bigr).
\end{equation}
In what follows, we fix a scenario $\omega \in \Omega$ and study the structure of $V_\omega(p^\DA)$. Given $p^{\mathrm{DA}}\in \mathcal{X}$, we can rewrite the 
formulation of $V_\omega(\cdot)$ in \eqref{eq:2S-ARO} in the following form 
\begin{equation}
\label{eq:value-to-go function}
V_\omega(p^{\mathrm{DA}} ) =\max_{\hat{p}^{\mathrm{PV}}\in \mathcal{U}} \,\,V_\omega( \hat{p}^{\mathrm{PV}},p^{\mathrm{DA}} ) ,
\end{equation}
where $V_\omega( \hat{p}^{\mathrm{PV}},p^{\mathrm{DA}} )$ is the so-called cost-to-go function, given as 
\begin{equation}
\label{eq:cost-to-go function}
V_\omega( \hat{p}^{\mathrm{PV}},p^{\mathrm{DA}} ) =\min_{\,y\in \mathcal{Y} (p^{\mathrm{DA}},\hat{p}^{\mathrm{PV}})} \,\,\Phi ^{\mathrm{RT}}(y; \omega).
\end{equation}

To facilitate the discussion, let us rewrite the imbalance settlement cost in \eqref{eq:imb_cost_kappa} as:
\begin{equation}\label{eq:RT_cost}
\begin{aligned}
S\bigl( p_{t}^{\mathrm{mis}};\lambda _{t,\omega}^{\mathrm{DA}} \bigr) &= \max \,\,\bigl\{ -\lambda _{t,\omega}^{\mathrm{imp}}p_{t}^{\mathrm{mis}}, -\lambda _{t,\omega}^{\exp}p_{t}^{\mathrm{mis}} \bigr\} 
\\ 
&=\max \,\,\bigl\{ \lambda _{t,\omega}^{\mathrm{imp}}\left( p_{t}^{\mathrm{DA}}-p_{t}^{\mathrm{PV}}+p_{t}^{\ell}-p_{t}^{\mathrm{ES}} \right) , \lambda _{t,\omega}^{\exp}\bigl( p_{t}^{\mathrm{DA}}-p_{t}^{\mathrm{PV}}+p_{t}^{\ell}-p_{t}^{\mathrm{ES}} \bigr) \bigr\} .
\end{aligned}
\end{equation}

To further reformulate the imbalance settlement cost, we show that the ``no simultaneous charging and discharging'' constraint \eqref{eq:ES_cc} is redundant and can be dropped without affecting the optimality of problem \eqref{eq:cost-to-go function}:
\begin{proposition}
\label{NoCDischarge}
Given a price scenario $\omega\in\Omega$ and a PV realization $\hat p^{\mathrm{PV}}\in\mathcal U$.
As long as i) the imbalance settlement rates in \eqref{eq:imb_rates} satisfy
$\lambda^{\mathrm{imp}}_{t,\omega}, \lambda^{\mathrm{exp}}_{t,\omega} > 0$ for all $t\in\mathcal T$ (that is, $\lambda^{\mathrm{DA}}_{t,\omega}>\kappa$); ii) $\eta^{\mathrm{ch}}, \eta^{\mathrm{dis}} \in (0,1]$; and iii) $c^{\mathrm{ES}}\ge 0$, then we have 
\begin{equation}
\bigl(p_{t}^{\mathrm{ES,ch}}\bigr)^*\bigl(p_{t}^{\mathrm{ES,dis}}\bigr)^*=0,\qquad \forall t\in\mathcal T
\end{equation}
for the optimal solution to the second-stage problem \eqref{eq:cost-to-go function} without constraint \eqref{eq:ES_cc}.
\end{proposition}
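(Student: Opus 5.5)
We argue by contradiction using a local perturbation that leaves the state-of-charge trajectory unchanged. Fix $\omega$ and $\hat p^{\PV}\in\mathcal U$. Suppose an optimal solution of \eqref{eq:cost-to-go function} with \eqref{eq:ES_cc} dropped has, at some hour $t$, both $(p_t^{\mathrm{ES,ch}})^*>0$ and $(p_t^{\mathrm{ES,dis}})^*>0$. We construct a feasible solution with strictly lower cost.

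\textbf{Step 1 (the perturbation).} For small $\epsilon>0$ set
\[
p_t^{\mathrm{ES,ch}}=(p_t^{\mathrm{ES,ch}})^*-\epsilon,\qquad p_t^{\mathrm{ES,dis}}=(p_t^{\mathrm{ES,dis}})^*-\eta^{\mathrm{ch}}\eta^{\mathrm{dis}}\epsilon.
\]
All other variables, including $p^{\PV}$ and every $e_j$, stay fixed.

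\textbf{Step 2 (feasibility).} The SOC increment $\eta^{\mathrm{ch}}p_t^{\mathrm{ES,ch}}-p_t^{\mathrm{ES,dis}}/\eta^{\mathrm{dis}}$ changes by $-\eta^{\mathrm{ch}}\epsilon+\eta^{\mathrm{ch}}\epsilon=0$. Hence \eqref{eq:SOC_dynamics}, \eqref{eq:soc_bounds} and \eqref{eq:terminal_condition} remain satisfied. The power bounds in \eqref{eq:es_power_bounds} hold for $\epsilon$ small enough, since both powers decrease from strictly positive values. The PV constraint \eqref{eq:pv_bound} is untouched, and $p_t^{\mathrm{mis}}$ is simply redefined through \eqref{eq:aggregate_power}--\eqref{eq:imbalance}.

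\textbf{Step 3 (cost comparison).} The net storage output $p_t^{\ES}=-p_t^{\mathrm{ES,ch}}+p_t^{\mathrm{ES,dis}}$ changes by
\[
\epsilon-\eta^{\mathrm{ch}}\eta^{\mathrm{dis}}\epsilon=(1-\eta^{\mathrm{ch}}\eta^{\mathrm{dis}})\epsilon\ge 0
\]
by condition ii). Therefore $p_t^{\mathrm{mis}}$ increases by the same nonnegative amount. From \eqref{eq:RT_cost}, $S$ is a maximum of two affine functions of $p_t^{\mathrm{mis}}$ with slopes $-\lambda^{\mathrm{imp}}_{t,\omega}$ and $-\lambda^{\mathrm{exp}}_{t,\omega}$. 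Both slopes are strictly negative by condition i), so $S$ is strictly decreasing in $p_t^{\mathrm{mis}}$. The degradation term changes by $-c^{\ES}\eta^{\mathrm{ch}}\eta^{\mathrm{dis}}\epsilon\le 0$ by condition iii). All other terms of \eqref{eq:Pi_RT} are unchanged.

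\textbf{Main obstacle: strictness when $\eta^{\mathrm{ch}}\eta^{\mathrm{dis}}=1$.} In that case $p_t^{\mathrm{mis}}$ does not move, and the improvement must come from the degradation term, which is strict only if $c^{\ES}>0$. If $c^{\ES}=0$ and both efficiencies equal one, the perturbed solution is merely as good as the original. The statement must then be read as \emph{there exists} an optimal solution satisfying the complementarity, rather than \emph{every} optimal solution does.

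I would handle this in two cases.
\begin{itemize}
\item \emph{At least one inequality strict} ($\eta^{\mathrm{ch}}\eta^{\mathrm{dis}}<1$ or $c^{\ES}>0$). The perturbation strictly lowers the cost, contradicting optimality. Hence every optimal solution satisfies the complementarity.
\item \emph{Degenerate case} ($\eta^{\mathrm{ch}}\eta^{\mathrm{dis}}=1$ and $c^{\ES}=0$). Push $\epsilon$ to its maximal value $\min\{(p_t^{\mathrm{ES,ch}})^*,(p_t^{\mathrm{ES,dis}})^*/(\eta^{\mathrm{ch}}\eta^{\mathrm{dis}})\}$, which zeroes one of the two powers at no increase in cost. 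Repeating this for each $t$ yields an optimal solution with $(p_t^{\mathrm{ES,ch}})^*(p_t^{\mathrm{ES,dis}})^*=0$ for all $t$.
\end{itemize}

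In both cases \eqref{eq:ES_cc} can be dropped without loss of optimality.
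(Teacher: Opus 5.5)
Your proof is correct and follows the paper's argument: the same SOC-preserving perturbation (lower charging by $\epsilon$ and discharging by $\eta^{\mathrm{ch}}\eta^{\mathrm{dis}}\epsilon$), followed by monotonicity of the imbalance and degradation terms. Your case split is a genuine refinement: the paper asserts a strict decrease without noting that it fails when $\eta^{\mathrm{ch}}\eta^{\mathrm{dis}}=1$ and $c^{\mathrm{ES}}=0$, and in that case, as you observe, one can only guarantee that some optimal solution satisfies the complementarity.
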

The proof of Proposition \ref{NoCDischarge} can be found in \ref{sect:app:NSCD}.

According to Proposition \ref{NoCDischarge}, the complementarity constraint \eqref{eq:ES_cc} of energy storage can be omitted. At optimality, the storage is either charging or discharging at each time $t$, but never both simultaneously. Hence, the imbalance settlement cost \eqref{eq:RT_cost} can be further reformulated as: 
\begin{align} \label{eq:RT_cost1}
\max \bigl\{ &
\lambda_{t,\omega}^{\mathrm{imp}}
( p_{t}^{\mathrm{DA}} - p_{t}^{\mathrm{PV}} + p_{t}^{\ell} - p_{t}^{\mathrm{ES,dis}} ),
\lambda_{t,\omega}^{\mathrm{imp}}
( p_{t}^{\mathrm{DA}} - p_{t}^{\mathrm{PV}} + p_{t}^{\ell} + p_{t}^{\mathrm{ES,ch}} ), \notag\\
& \lambda_{t,\omega}^{\exp}
( p_{t}^{\mathrm{DA}} - p_{t}^{\mathrm{PV}} + p_{t}^{\ell} - p_{t}^{\mathrm{ES,dis}} ),
\lambda_{t,\omega}^{\exp}
( p_{t}^{\mathrm{DA}} - p_{t}^{\mathrm{PV}} + p_{t}^{\ell} + p_{t}^{\mathrm{ES,ch}} )
\bigr\}.
\end{align}

If we define an auxiliary variable $\Delta e_{t}\coloneqq \eta ^{\mathrm{ch}}p_{t}^{\mathrm{ES},\mathrm{ch}}-p_{t}^{\mathrm{ES},\mathrm{dis}}/\eta^{\mathrm{dis}}$, it follows from \eqref{eq:SOC_dynamics} that $\Delta e_t = e_t - e_{t-1}$. The energy rating constraint \eqref{eq:soc_bounds} and the terminal SOC constraint \eqref{eq:terminal_condition} can then be equivalently reformulated as: 
\begin{equation} \label{eq:ES_bounds}
    \ubar{e} - e_0 \le \sum_{\tau =1}^t{\Delta e_\tau}\le \bar{e} - e_0, \quad \forall t\in \mathcal{T} \setminus \{T\}, \qquad \sum_{\tau =1}^T{\Delta e_\tau}=0,
\end{equation}
which we denote by $\mathcal{Y}_e \subseteq \mathbb{R}^T$.
The real-time cost $\Phi^\RT (y;\omega)$ at time $t$ in \eqref{eq:Pi_RT}, which is the sum of the imbalance settlement cost \eqref{eq:RT_cost1}, the energy storage operating cost $c^{\mathrm{ES}} p_{t}^{\mathrm{ES,dis}}$, and the PV operating cost $c^\PV p_t^\PV$, can be reformulated as
\begin{align} \label{eq:imbalance cost reformulated}
    f_{t,\omega}(\Delta e_t, p_t^\PV, p_t^\DA) \coloneqq \max \bigl\{ & \tilde c_{t,\omega}^\mathrm{dis} \Delta e_t + \utilde c_{t,\omega}^\PV p_t^\PV + \lambda_{t,\omega}^\mathrm{imp} (p_t^\DA + p_t^\ell), \notag \\
    & \tilde c_{t,\omega}^\mathrm{ch} \Delta e_t + \utilde c_{t,\omega}^\PV p_t^\PV + \lambda_{t,\omega}^\mathrm{imp} (p_t^\DA + p_t^\ell), \notag \\
    & \utilde c_{t,\omega}^\mathrm{dis} \Delta e_t + \tilde c_{t,\omega}^\PV p_t^\PV + \lambda_{t,\omega}^\mathrm{exp} (p_t^\DA + p_t^\ell), \notag \\
    & \utilde c_{t,\omega}^\mathrm{ch} \Delta e_t + \tilde c_{t,\omega}^\PV p_t^\PV + \lambda_{t,\omega}^\mathrm{exp} (p_t^\DA + p_t^\ell),
    \bigr\}
\end{align}
where $\tilde c_{t,\omega}^\mathrm{dis} \coloneqq \eta^\mathrm{dis}(\lambda^\DA_{t,\omega} + \kappa - c^\ES)$, $\tilde c_{t,\omega}^\mathrm{ch} \coloneqq (\lambda^\DA_{t,\omega} + \kappa) / \eta^{\mathrm{ch}}$, $\utilde c_{t,\omega}^\mathrm{dis} \coloneqq \eta^\mathrm{dis}(\lambda^\DA_{t,\omega} - \kappa - c^\ES)$, $\utilde c_{t,\omega}^\mathrm{ch} \coloneqq (\lambda^\DA_{t,\omega} - \kappa) / \eta^{\mathrm{ch}}$, are the effective charging/discharging prices, and $\tilde c_{t,\omega}^\PV \coloneqq c^\PV - \lambda^\DA_{t,\omega} + \kappa$, $\utilde c_{t,\omega}^\PV \coloneqq c^\PV - \lambda^\DA_{t,\omega} - \kappa$ are the effective PV prices.

After all above derivation, the cost-to-go function $V_\omega(\hat{p}^{\mathrm{PV}},p^{\mathrm{DA}})$ can be reformulated as the following problem with a piecewise-linear objective function:
\begin{subequations}
\label{eq:reformulated_cost_to_go}
\begin{align}
V_\omega(\hat p^\PV, p^\DA) = \min_{\Delta e, p^\PV} \quad
&\sum_{t\in \mathcal{T}}
f_{t,\omega}(\Delta e_t, p_t^\PV, p_t^\DA)
\\
\text{s.t.}\quad 
& 0 \le p_t^{\mathrm{PV}} \le \hat p_t^{\mathrm{PV}}, \qquad \forall t\in\mathcal{T} \label{eq:reformulated_cost_to_go_pv}\\
& \Delta e \in \mathcal{Y}_e. \label{eq:reformulated_cost_to_go_es}
\end{align}
\end{subequations}

\subsection{Decoupling of the Max-Min Problem}

The second-stage problem (the value function in \eqref{eq:value-to-go function}) is a non-convex max-min coupled problem, which is computationally challenging in its original form. As mentioned in \cite{sun_adaptive_2014}, for the piecewise-linear value function of the 2S-ARO problem, evaluating the value function \eqref{eq:value-to-go function} for any fixed first-stage decision variable ($p^\DA$) is hard. As discussed in Section~\ref{Literature Review}, the standard approaches include transforming this problem into an MILP problem, whose computational performance is limited in industrial applications. For the second-stage problem in this day-ahead offering problem, in which the PV generation follows budgeted uncertainty set, a more computationally efficient recourse counterpart is developed in this section.

\begin{proposition}[Decoupling of the second-stage problem]
\label{DecouplingCons}
Let a price scenario $\omega\in\Omega$ and a PV generation budgeted uncertainty set $\mathcal U$ be given. We assume that day-ahead prices are positive and higher than the PV marginal cost, i.e., $\lambda_{t,\omega}^{\DA} > c^{\PV}  > 0,  \forall t\in\mathcal{T},\ \forall\omega\in\Omega$.
As long as 
\begin{equation}
\label{eq:kappa_cond_prop}
0 < \kappa \le 
\min\nolimits_{\omega\in\Omega, t, t'\in\mathcal{T}, t \neq t'} \Bigl\{
\bigl|\lambda_{t,\omega}^{\DA}-\lambda_{t',\omega}^{\DA}\bigr|/2,
\ \lambda_{t,\omega}^{\DA}-c^{\PV}
\Bigr\},
\end{equation}
we have
\begin{enumerate}[label=\roman*), leftmargin=*]
    \item the optimal dispatch decision for PV generation in the second-stage problem satisfies
    \begin{equation}
    p_t^{\PV,*}(\omega)=\hat p_t^{\PV,*}(\omega),\quad \forall t\in\mathcal T,
    \end{equation}
    where $\hat p^{\PV,*}(\omega)$ can be chosen as an optimal solution of
    \begin{equation}
    \label{eq:worst_case_pv_prop}
    \max_{\hat p^{\PV}\in\mathcal U}
    \sum_{t\in\mathcal T}-\lambda_{t,\omega}^{\DA} \hat p_t^{\PV};
    \end{equation}
    \item Given $p_t^{\PV,*}(\omega)$, the second-stage problem \eqref{eq:value-to-go function} reduces to the deterministic problem
    \begin{equation}\label{eq:recourse_given_worst_pv}
    \ubar V_\omega(p^{\DA})
    \coloneqq\min_{\Delta e \in \mathcal{Y}_e}
    \sum_{t\in\mathcal{T}}
    f_{t,\omega}(\Delta e_t, p_t^{\PV,*}, p_t^\DA)
    \end{equation}
\end{enumerate}
\end{proposition}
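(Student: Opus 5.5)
We fix $p^{\DA}\in\mathcal X$ and $\omega\in\Omega$, and use Proposition~\ref{NoCDischarge} to drop \eqref{eq:ES_cc}. This lets us work directly with the reformulated inner problem \eqref{eq:reformulated_cost_to_go}, whose objective is the sum of the four-piece maxima $f_{t,\omega}$ in \eqref{eq:imbalance cost reformulated}. The plan has three steps: first remove the inner PV variable, then show that the outer maximization over $\mathcal U$ has a worst case that does not depend on $\Delta e$, and finally exchange the order of the max and the min.

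\textbf{Step 1 (no curtailment).} The coefficient of $p_t^{\PV}$ in every affine piece of $f_{t,\omega}$ is either $\tilde c^{\PV}_{t,\omega}=c^{\PV}-\lambda^{\DA}_{t,\omega}+\kappa$ or $\utilde c^{\PV}_{t,\omega}=c^{\PV}-\lambda^{\DA}_{t,\omega}-\kappa$. The condition $\kappa\le\lambda^{\DA}_{t,\omega}-c^{\PV}$ in \eqref{eq:kappa_cond_prop} makes both coefficients nonpositive. Hence every piece, and therefore their pointwise maximum, is nonincreasing in $p_t^{\PV}$. Since $p_t^{\PV}$ is subject only to the box \eqref{eq:reformulated_cost_to_go_pv}, we may take $p_t^{\PV}=\hat p_t^{\PV}$ for every $\Delta e$. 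The inner problem then becomes $\min_{\Delta e\in\mathcal Y_e}\sum_t f_{t,\omega}(\Delta e_t,\hat p_t^{\PV},p_t^{\DA})$.

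\textbf{Step 2 (worst-case PV independent of $\Delta e$).} For fixed $\Delta e$, the map $\hat p^{\PV}\mapsto\sum_t f_{t,\omega}$ is convex piecewise linear. Its slope in $\hat p_t^{\PV}$ is whichever of $\tilde c^{\PV}_{t,\omega}$ or $\utilde c^{\PV}_{t,\omega}$ belongs to the active piece. The gap condition $\kappa\le|\lambda^{\DA}_{t,\omega}-\lambda^{\DA}_{t',\omega}|/2$ gives, whenever $\lambda^{\DA}_{t,\omega}>\lambda^{\DA}_{t',\omega}$,
\[
\tilde c^{\PV}_{t,\omega}-\utilde c^{\PV}_{t',\omega}=2\kappa-(\lambda^{\DA}_{t,\omega}-\lambda^{\DA}_{t',\omega})\le 0 .
\]
So every admissible slope at $t$ is at most every admissible slope at $t'$, and the slopes are ordered like $-\lambda^{\DA}_{t,\omega}$ whatever pieces are active. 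The set $\mathcal U$ has a constant half-width across $t$ and a budget $\Gamma$. Maximizing over it is therefore a continuous-knapsack-type problem: it pushes $\xi_t=+1$ on the periods with the least negative slopes, i.e.\ the lowest prices, and $\xi_t=-1$ on the most negative slopes, i.e.\ the highest prices, spending the budget by slope magnitude. This step needs care. Because the objective is only piecewise linear in $\hat p^{\PV}$, I would argue via the vertices of $\mathcal U$. The uniform ordering shows that the vertex selected by the linear surrogate \eqref{eq:worst_case_pv_prop}, call it $\hat p^{\PV,*}(\omega)$, weakly dominates every other vertex for every fixed $\Delta e$; this follows from an exchange argument that swaps the deviation between two periods. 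Convexity then extends the domination to all of $\mathcal U$. This gives part (i).

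\textbf{Step 3 (swap max and min).} Since $\hat p^{\PV,*}(\omega)$ maximizes the inner objective uniformly in $\Delta e$,
\[
\max_{\hat p^{\PV}\in\mathcal U}\min_{\Delta e\in\mathcal Y_e} F(\Delta e,\hat p^{\PV}) \;\ge\; \min_{\Delta e\in\mathcal Y_e} F\bigl(\Delta e,\hat p^{\PV,*}\bigr) .
\]
The reverse inequality follows because $F(\Delta e,\hat p^{\PV})\le F(\Delta e,\hat p^{\PV,*})$ holds pointwise. Together these give equality with \eqref{eq:recourse_given_worst_pv}, which is part (ii).

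The main obstacle is Step 2. Specifically, the exchange argument must handle the magnitude of the slope as well as its sign: deviations of size one at different periods move the objective by different amounts, and ties in price need separate treatment. For this I would lean on the equal-width assumption on $\mathcal U$.
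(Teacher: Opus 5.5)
Your proposal is correct and follows the paper's proof: no curtailment because $\tilde c^{\PV}_{t,\omega},\utilde c^{\PV}_{t,\omega}\le 0$, the $2\kappa$ price gap making the slope ranking independent of $\Delta e$, then a knapsack argument on the equal-width budget set. Your vertex/convexity exchange argument and explicit max--min interchange add rigor the paper skips, since the paper treats the outer problem for fixed $\Delta e$ as linear although it is only convex piecewise linear in $\hat p^{\PV}$.

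Two small corrections. First, because every PV slope is nonpositive, the adversary never gains from $\xi_t=+1$. The maximizer of \eqref{eq:worst_case_pv_prop} spends the whole budget on negative deviations, highest-price periods first. So your exchange step should be preceded by the observation that flipping a $+1$ to $-1$, or spending leftover budget on a negative deviation, never lowers the objective. Second, ties need no separate treatment: $\kappa>0$ together with \eqref{eq:kappa_cond_prop} forces the prices within each scenario to be pairwise distinct.
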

The proof of Proposition \ref{DecouplingCons} can be found in \ref{sect:app:DPC}.

Proposition~\ref{DecouplingCons} implies that, for any fixed $p^{\DA}$ and $\omega$, the original second-stage problem can be evaluated in a sequential manner. The worst-case PV availability profile is first obtained from \eqref{eq:worst_case_pv_prop}; after fixing $p^{\PV,*}(\omega)$, the second-stage problem reduces to \eqref{eq:recourse_given_worst_pv}. Therefore, the original VPP day-ahead offering problem in \eqref{eq:2S-ARO} can in fact be reformulated as a two-stage stochastic LP problem with a convex piecewise-linear objective function and a polyhedral feasible region, as shown below:
\begin{equation}
\label{eq:2S-LP}
\min_{p^{\mathrm{DA}} \in \mathcal{X}} \;
\sum_{\omega \in \Omega} \rho_{\omega} 
\Bigl(
    -\sum_{t \in \mathcal{T}}
        \lambda_{t,\omega}^{\mathrm{DA}} \,
        p_{t,s_t(\omega)}^{\mathrm{DA}}
    + 
    \min_{\Delta e \in \mathcal{Y}_e}
    \sum_{t \in \mathcal{T}} f_{t,\omega}(\Delta e_t, p_t^{\PV,*}(\omega), p_t^\DA)
\Bigr) 
\end{equation}
where $p_{t}^{\mathrm{PV},*}(\omega) \in 
\arg\max_{\hat{p}^{\mathrm{PV}}\in \mathcal{U}}
\sum_{t\in \mathcal{T}} -\lambda_{t,\omega}^{\DA} \hat{p}_{t}^{\mathrm{PV}}$ can be optimized independently without coupling the second-stage optimization in $f_{t,\omega}\left( \Delta e_t, p_t^\PV, p_t^\DA \right)$. This decoupling replaces the original second-stage max-min evaluation in \eqref{eq:value-to-go function} with a linear maximization over $\mathcal U$ and a deterministic minimization of piecewise-linear objective over linear constraints, thereby avoiding complicated MILP reformulations. The final two-stage stochastic LP reformulation in \eqref{eq:2S-LP} of the offering problem looks quite different from the functional form \eqref{eq:2S-ARO}. However, the main advantage of the reformulation in \eqref{eq:2S-LP} is that it reveals the separable convex piecewise-linear structure of the recourse problem, which is essential for the efficient algorithm development in the sequel.

%  z_{t,\omega}'(p^{\PV,*}

%% file: LSA.tex
\subsection{An Exact Greedy Oracle for the Decoupled Second-Stage Problem}
\label{Greedy Algorithm}

Following the reformulated second-stage problem in \eqref{eq:2S-LP}, the original non-convex second-stage problem exhibits decoupled structure between $\hat p^{\PV}$ and $\Delta e$, so that after fixing the worst-case PV realization $p_t^{\PV,*}(\omega)$, the second-stage problem reduces to the following deterministic problem based on Proposition \ref{DecouplingCons}:
\begin{equation} \label{eq:2ndcompact}
    \min_{\Delta e \in \mathcal{Y}_e}
    \sum_{t\in\mathcal{T}}
    f_{t,\omega}(\Delta e_t, p_t^{\PV,*}, p_t^\DA),
\end{equation}
where $\mathcal{Y}^e$ is the feasible set induced by the storage cumulative sum constraints and the terminal SOC requirement, and each $f_{t,\omega}(\cdot)$ is a convex piecewise-linear function of the scalar variable $\Delta e_t$. Hence, for each scenario $\omega$, the recourse evaluation is a separable convex piecewise-linear optimization problem over nested cumulative sum constraints.

Problems of this form are classical in resource allocation settings. Accordingly, our goal in this subsection is not to claim a new general-purpose algorithm for this problem class. Instead, we exploit the specific structure revealed by the reformulation to build a lightweight \emph{exact recourse oracle} that is particularly convenient for repeated scenario-wise evaluations inside the projected subgradient framework to be developed in Section \ref{Projected Subgradient Method}. In addition to the optimal value of \eqref{eq:2ndcompact}, the oracle also records the active affine piece of each $f_{t,\omega}$ at the optimum, which will later be used to assemble a subgradient with respect to the first-stage decision $p^{DA}$.

The specialized greedy algorithm to solve Problem \eqref{eq:2ndcompact} is presented in Algorithm \ref{alg:local_search_algorithm}. The key observation behind the algorithm is that decreasing a single coordinate $\Delta e_t$ by $\delta$ decreases all subsequent SOC variables $e_{t'}, t' > t$, by the same amount. Thus, although the objective is separable in $\Delta e$, the feasibility constraints couple the variables through a nested cumulative sum structure. The greedy algorithm exploits this coupling constraint together with the piecewise-linear structure of the objective function.

Rather than starting from an arbitrary feasible point, Algorithm \ref{alg:local_search_algorithm} starts from the point that upper bounds every feasible solution componentwise:
\begin{equation}
    \Delta e_t^{(0)} = \bar e_t - \ubar e_{t-1}, \quad t\in\mathcal{T}.
\end{equation}
The choice is convenient for two reasons. First, all subsequent updates are monotone: the algorithm only decreases the coordinates of $\Delta e$. Second, every update affects only the suffix of the SOC trajectory, which makes it easier to maintain feasibility and detect binding constraints.

For each time step $t$, the scalar function $f_{t,\omega}(\Delta e_t; \cdot)$ has four affine pieces. We rank the slopes of all these pieces across $T$ time steps in a lexicographically decreasing ranked list $\mathcal{A}$. At each iteration, let $\hat {\alpha}_{\tau,\iota}$ be the first entry of $\mathcal{A}$, which indicates the time step to be considered for update. If the time step admits an improving update, the algorithm performs a descent update to the solution $\Delta e$ by decreasing $\Delta e_\tau$. The admissible step is chosen as the largest decrement that remains on the current affine piece and preserves the nested SOC structure. After each update, the algorithm removes from $\mathcal{A}$ any affine pieces or time steps that can no longer become active under subsequent monotone decreases. In particular, if the current affine piece is exhausted, it is deleted immediately. If a cumulative sum constraint becomes binding in a way that blocks further updates on some time steps, then the corresponding active pieces are removed as well. Since $\mathcal{A}$ is finite, this removal mechanism ensures finite termination and avoids redundancy.

For notation convenience, we define that, for each iterate $k$ and time $\tau \in \mathcal{T}$, $e_{\tau}^{(k)} \coloneqq e_0 + \sum_{j=1}^{\tau}\Delta e_j^{(k)}$; this is also the definition of the SOC at that time. For each $\tau \in \mathcal{T}$, we define the time-dependent SOC upper bound $\bar{e}_{\tau}$ and lower bound $\ubar{e}_{\tau}$ as: $\bar{e}_{\tau}=\bar e,\ \ubar{e}_{\tau}=\ubar e, \ \forall \tau \in \mathcal{T}\setminus\{T\}$, $\bar{e}_0 =\ubar{e}_0= e_0$ , and $\bar{e}_{T}=\ubar{e}_{T}=e_0$. We denote by $a_t(\Delta e_t)$ the left derivative, $a_t(\Delta e_t) := \lim_{\epsilon \to 0^-} \mathrm{d}f_{t,\omega}(\Delta e_t+\epsilon;\cdot)/\mathrm{d}\Delta e_t$. Let $\{\hat\alpha_{t,i}\}_{i=1}^{4}$ be the slopes of the affine pieces of the piecewise linear function $f_{t,\omega}(\Delta e_t;\cdot)$ in \eqref{eq:2S-LP} and $\mathcal A:=\{\hat\alpha_{t,i}\}_{t\in[\mathcal T],\ i=[4]}$ be the lexicographically decreasing list obtained by sorting the slopes of all pieces of $f_{t,\omega}(\Delta e_t;\cdot)$. Since the algorithm decreases $\Delta e_t$, a larger left derivative $a_t(\Delta e_t)$ corresponds to a larger first-order reduction in the objective per unit decrement.

\begin{algorithm}[!t]
\caption{Greedy Oracle for the Second-Stage LP Problem}
\label{alg:local_search_algorithm}
\begin{algorithmic}[1]
\REQUIRE Price scenario $\omega$, first-stage decision $p^\DA$, PV portfolio $p^{\PV,*}(\omega)$, horizon $\mathcal{T}$; SOC bounds $e_0,\bar e, \ubar e$; time-dependent SOC bounds $\bar{e}_{\tau}, \ubar{e}_{\tau}, \forall \tau \in \mathcal{T}$; list set $\mathcal A^{(0)}$.
\ENSURE An optimal solution $\Delta e^*$ of the second-stage LP problem in \eqref{eq:2ndcompact}.
\STATE \textbf{Initialization:} $\Delta e_t^{(0)} \gets \bar{e}_t - \ubar{e}_{t-1}, t \in \mathcal{T}$; set $k\gets 0$
\WHILE{$\mathcal \mathcal{A}^{(k)}\neq\varnothing$} 
\STATE Let $\hat\alpha_{t,\iota}$ be the first element of $\mathcal A$ 
    \IF{$a_t(\Delta e_t^{(k)})<\hat\alpha_{t,\iota}$}
        \STATE $\Delta e_{\tau}^{(k+1)} \gets \Delta e_{\tau}^{(k)}, \forall \tau \in \mathcal {T}$; $\mathcal{A}^{(k+1)} \gets \mathcal{A}^{(k)} \setminus \{\hat\alpha_{t,\iota}\}$ \label{line:na}
    \ELSIF{$a_t(\Delta e_t^{(k)}) \le 0$}
        \IF{$\Delta e^{(k)} \in \mathcal{Y}_e$}
            \STATE \textbf{return} $\Delta e^{(k)}$ \label{line:early_return}
        \ELSE
            % \STATE Let $\tau$ be the first time step such that $e_t^{(k)} - \bar e_t > 0$.  Update $\Delta e_\tau^{(k+1)} \gets \Delta e_\tau^{(k)} - (e_t^{(k)} - \bar e_t)$
            \STATE $\tau \gets \min \{\tau: e_\tau^{(k)} - \bar e_\tau > 0\}$; let $\hat \alpha_{\tau', i}$ be the first element in $\mathcal{A}$ such that $\tau' \le \tau$.
            \STATE $\delta_1 \gets \sup \{ \delta: a_{\tau'}(\Delta e_{\tau'}^{(k)} - \delta) \ge \hat{\alpha}_{\tau',i} \}$; $\;\;\delta_2 \gets \min_{\tau' \le s < \tau} \{e_s^{(k)} - \ubar e_s\}$; $\;\;\delta_3 \gets e_\tau^{(k)} - \bar e_\tau$ \label{line:d1d2d3}
            \STATE $\delta^* \gets \min\{\delta_1, \delta_2, \delta_3\}$;  $\Delta e_{\tau'}^{(k+1)} \gets \Delta e_{\tau'}^{(k)} - \delta^*,\ \Delta e_s^{(k+1)} \gets \Delta e_s^{(k)}, \ \forall s \in \mathcal{T}\setminus\{\tau'\}$ \label{line:update_negslope}
            \IF{$\delta^*=\delta_1$}
                \STATE $\mathcal{A}^{(k+1)} \gets \mathcal{A}^{(k)} \setminus \{\hat \alpha_{\tau', i}\}$ \label{line:d1remove1}
            \ELSIF{$\delta^*=\delta_2$}
                \STATE $\tau^* \gets \max_{\tau' \le s < \tau} \{s: e_{s}^{(k)} - \ubar{e}_{s} = \delta_2\}$; $\;\; \mathcal{A}^{(k+1)} \gets \mathcal{A}^{(k)} \setminus \{\hat\alpha_{s,i}\}_{s \in [\tau^*],\, i \in [4]}$ \label{line:d2remove1}
            \ENDIF
        \ENDIF
    \ELSE
        \STATE $\delta_1 \gets \sup \{ \delta: a_t(\Delta e_t^{(k)} - \delta) \ge \hat{\alpha}_{t,\iota} \}$; $\delta_2 \gets \min_{t \le \tau \le T}\{e_{\tau}^{(k)} - \ubar{e}_{\tau}\} - \max_{1 \le \tau \le t-1}\{[e_{\tau}^{(k)} - \bar{e}_{\tau}]_+\}$  \label{line:d1d2}
        \STATE $\delta^* \gets \min \{\delta_1,\delta_2\}$; Update
        $\Delta e_t^{(k+1)} \gets \Delta e_t^{(k)} - \delta^*,\
        \Delta e_\tau^{(k+1)} \gets \Delta e_\tau^{(k)},
        \ \forall \tau \in \mathcal{T}\setminus\{t\}$ \label{line:update}
        \IF{$\delta^* = \delta_1$}
            \STATE $\mathcal{A}^{(k+1)} \gets \mathcal{A}^{(k)} \setminus \{\hat\alpha_{t,\iota}\}$ \label{line:d1remove2}
        \ELSIF{$\delta^* = \delta_2$}
            \IF{$\min_{t \le \tau \le T}\{e_{\tau}^{(k)} - \ubar{e}_{\tau}\} = \delta_2$}
                \STATE $\tau \gets \max_{t\le s \le T} \{s : e_s^{(k)} - \ubar{e}_s = \delta_2\}$; $\;\;\mathcal{A}^{(k+1)} \gets \mathcal{A}^{(k)} \setminus \{\hat\alpha_{s,i}\}_{s \in [\tau],\, i \in [4]}$ \label{line:d2remove2}
            \ELSE
                \STATE $\tilde \delta \gets \max_{1 \le \tau \le t-1}\{e_{\tau}^{(k)} - \bar{e}_{\tau}\}$; $\utilde \delta \gets \min_{t \le \tau \le T}\{e_{\tau}^{(k)} - \ubar{e}_{\tau}\}$; \\
                $\utilde \tau \gets \min_{1 \le \tau \le t-1} \{\tau : e_{\tau}^{(k)} - \bar{e}_{\tau} = \tilde \delta\}$; $\tilde \tau \gets \max_{t \le \tau \le T} \{\tau : e_{\tau}^{(k)} - \ubar{e}_{\tau} = \utilde \delta\}$; \\
                $\mathcal{A}^{(k+1)} \gets \mathcal{A}^{(k)} \setminus \{\hat\alpha_{s,i}\}_{s \in [\utilde \tau + 1, \tilde \tau],\, i \in [4]}$ \label{line:d2remove3}
            \ENDIF
        \ENDIF
    \ENDIF
    \STATE $k\gets k+1$
\ENDWHILE
\STATE \textbf{return} $\Delta e^{(k)}$
\end{algorithmic}
\end{algorithm}

The next proposition shows that Algorithm \ref{alg:local_search_algorithm} always returns a feasible solution:
\begin{proposition} \label{thm:GDfeasibility}
    Let $\{\Delta e^{(k)}\}$ be the sequence generated by Algorithm \ref{alg:local_search_algorithm}, and define the induced SOC $e_t^{(k)} \coloneqq e_0 + \sum_{s=1}^t \Delta e_s^{(k)}, t \in \mathcal{T}$
    and the maximum upper bound violation before time step $t$ $M_t^{(k)} \coloneqq \max_{1\le r < t} [e_r^{(k)} - \bar e_r]_+, t = \mathcal{T}\setminus \{1\}$.
    Then, for every iteration $k$, we have
    \begin{equation} \label{eq:SOCineq}
        e_t^{(k)} \ge \ubar e_t, \qquad t \in \mathcal{T},
    \end{equation}
    and
    \begin{equation} \label{eq:ubvineq}
        M_t^{(k)} \le e_t^{(k)} - \ubar e_t, \qquad t \in \mathcal{T}\setminus \{1\}.
    \end{equation}
    Consequently, every intermediate SOC solution is lower bounded by the SOC lower bound $\ubar e
    $, and every upper bound violation can still be fixed by SOC decrement. In particular, Algorithm \ref{alg:local_search_algorithm} returns a point $\Delta e^* \in \mathcal{Y}_e$.
\end{proposition}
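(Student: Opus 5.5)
We argue by induction on the iteration counter $k$, treating \eqref{eq:SOCineq} and \eqref{eq:ubvineq} as a joint invariant. Feasibility of the returned point then follows from a separate termination argument.

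\textbf{Base case.} At $k=0$ we substitute $\Delta e_t^{(0)}=\bar e_t-\ubar e_{t-1}$ into $e_t^{(0)}=e_0+\sum_{j\le t}\Delta e_j^{(0)}$ and telescope, which gives
\[
e_t^{(0)}=\bar e_t+\sum_{j=1}^{t-1}(\bar e_j-\ubar e_j),\qquad
e_t^{(0)}-\ubar e_t=\sum_{j=1}^{t}(\bar e_j-\ubar e_j).
\]
Both sums are nonnegative, so \eqref{eq:SOCineq} holds. For $r<t$ the violation $[e_r^{(0)}-\bar e_r]_+$ equals a partial sum of the same nonnegative terms with fewer summands. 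Taking the maximum over $r<t$ therefore yields $M_t^{(0)}\le e_t^{(0)}-\ubar e_t$.

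\textbf{Inductive step.} Each iteration either leaves $\Delta e$ unchanged (line~\ref{line:na}) or decreases a single coordinate $\Delta e_t$ by some $\delta\ge0$. A decrease shifts the whole suffix $e_j$, $j\ge t$, down by $\delta$. On that suffix the slacks $e_j-\ubar e_j$ drop by exactly $\delta$, while the violations $[e_j-\bar e_j]_+$ drop by at most $\delta$. Consequently \eqref{eq:ubvineq} can only fail through a violation at some $r<t$, which is untouched, being compared against a reduced slack at some $j\ge t$. Both invariants are therefore preserved whenever
\[
\delta\le \min_{t\le j\le T}\bigl(e_j-\ubar e_j\bigr)-\max_{r<t}\,[e_r-\bar e_r]_+ .
\]

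We then check the two update lines against this condition.
\begin{itemize}
\item In line~\ref{line:update}, $\delta_2$ is exactly the right-hand side above and $\delta^*\le\delta_2$, so the condition holds directly.
\item In line~\ref{line:update_negslope}, let $\tau$ be the first index with an upper-bound violation, so there are no violations before $\tau$ and the max term is zero for prefixes ending before $\tau$. Indices in $[\tau',\tau)$ are protected by $\delta^*\le\delta_2$. For indices $j\ge\tau$, the inductive hypothesis gives $e_j-\ubar e_j\ge M_j\ge e_\tau-\bar e_\tau=\delta_3\ge\delta^*$.
\end{itemize}
We expect this second case to be the most delicate bookkeeping. Note that $\delta_3$ is exactly the violation being repaired: after the step, violations at indices $\ge\tau$ shrink by at most $\delta^*$, and the corresponding slacks shrink by exactly $\delta^*$. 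Hence \eqref{eq:ubvineq} persists.

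\textbf{Termination with a feasible point.} The list $\mathcal A$ is finite, and every iteration either removes an element or triggers the early return. We would also need to argue that the non-removing steps cannot repeat indefinitely; the plan is to show that each such step makes some $\delta_i$ binding, which in turn forces a removal at the next step. By \eqref{eq:SOCineq}, lower bounds always hold, and the terminal equality reduces to the upper bound at $T$, since $\bar e_T=\ubar e_T$. The remaining task, which we take to be the \textbf{main obstacle}, is to show that $\mathcal A$ cannot become empty while some upper bound is still violated. For this we examine each removal line.
\begin{itemize}
\item Lines~\ref{line:d1remove1} and~\ref{line:d1remove2} never delete the last piece of a time step, since the lowest piece has no breakpoint and hence $\delta_1=\infty$ there.
\item Line~\ref{line:d2remove1} deletes only indices before $\tau$.
\item Line~\ref{line:d2remove2} deletes indices up to a binding lower bound. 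By \eqref{eq:ubvineq}, a zero slack forces $M=0$ before that index, so no violating step is deleted.
\item Line~\ref{line:d2remove3} deletes only the range $[\utilde\tau+1,\tilde\tau]$, which keeps the violating index $\utilde\tau$.
\end{itemize}
Hence whenever a violation remains, some piece at or before the violating step stays in $\mathcal A$, and the algorithm can only exit through line~\ref{line:early_return} or with $\Delta e^{(k)}\in\mathcal Y_e$.
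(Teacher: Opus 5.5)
Your proposal is correct and follows the paper's proof essentially step for step: the same telescoping base case, the same sufficient decrement bound $\delta\le\min_{j\ge t}(e_j-\ubar e_j)-\max_{r<t}[e_r-\bar e_r]_+$ (the paper's \eqref{eq:d2cond}), checked for line~\ref{line:update} via $\delta_2$ and for line~\ref{line:update_negslope} via $\delta_2,\delta_3$, and the same line-by-line audit of the removal steps showing $\mathcal A$ cannot empty while an upper bound is violated. Two small points need repair:

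\begin{enumerate}
\item \textbf{Finite termination.} The paper merely asserts this, and your sketched mechanism is not right. A step with $\delta^*=\delta_3$ removes nothing and need not be followed by a removal. The correct argument is that SOCs only decrease, so the set of violated indices only shrinks, and each $\delta_3$-step clears its first element; hence at most $T$ steps are non-removing.
\item \textbf{The case $j=\tau$ in line~\ref{line:update_negslope}.} Here $M_\tau=0$, so the bound through $M_\tau$ does not apply. Use $e_\tau-\ubar e_\tau\ge e_\tau-\bar e_\tau=\delta_3$ directly. The paper makes the same slip.
\end{enumerate}
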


The proof of Proposition \ref{thm:GDfeasibility} can be found in \ref{app:C}.

The next proposition shows that Algorithm \ref{alg:local_search_algorithm} indeed returns an optimal solution:
\begin{proposition} \label{thm:GDoptimality}
    The solution $\Delta e^*$ returned by Algorithm \ref{alg:local_search_algorithm} is an optimal solution of problem \eqref{eq:2ndcompact}.
\end{proposition}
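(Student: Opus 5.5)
To prove Proposition~\ref{thm:GDoptimality}, I will change variables so that the problem becomes a separable concave resource allocation problem with nested (cumulative) constraints, and then invoke the classical optimality theory of greedy marginal allocation for that class.

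\textbf{Step 1: change of variables.} Set $d \coloneqq \Delta e^{(0)} - \Delta e$, where $\Delta e^{(0)}_t = \bar e_t - \ubar e_{t-1}$ is the starting point of Algorithm~\ref{alg:local_search_algorithm}. Then $e_t = e^{(0)}_t - \sum_{j\le t} d_j$. Using the expressions for $e^{(0)}_t-\bar e_t$ and $e^{(0)}_t-\ubar e_t$ (as computed at the start of the proof of Proposition~\ref{thm:GDfeasibility}), the constraints $\Delta e\in\mathcal Y_e$ become two-sided bounds on the partial sums $D_t\coloneqq\sum_{j\le t} d_j$:
\[
\ell_t \le D_t \le u_t,
\]
with $D_T$ fixed. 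Since $\Delta e^{(0)}$ upper bounds every feasible point componentwise, $d\ge 0$ on the feasible set. The objective $\sum_t f_{t,\omega}(\Delta e_t)$ is minimized exactly when $\sum_t \psi_t(d_t)$ is maximized, where
\[
\psi_t(d_t) \coloneqq f_{t,\omega}\bigl(\Delta e^{(0)}_t\bigr) - f_{t,\omega}\bigl(\Delta e^{(0)}_t - d_t\bigr).
\]
Each $\psi_t$ is concave piecewise linear with nonincreasing marginal gains equal to the slopes $\hat\alpha_{t,i}$ traversed from the right. This is a separable concave maximization under nested (laminar/chain) partial-sum constraints, which is a polymatroid-type structure, the setting of \cite{federgruen1986greedy}.

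\textbf{Step 2: optimality certificate.} Rather than cite the classical result blindly, I will verify optimality through an exchange/KKT argument. At the returned $d^*$, an improving feasible direction must be either a single-coordinate increase or an exchange of the form ``increase $d_s$, decrease $d_r$''. Any such exchange must respect the partial-sum bounds, which are binding at certain indices. I will show that no such move improves the objective:
\begin{itemize}
\item \emph{Removed segments.} Every segment deleted from $\mathcal A$ is deleted for one of three reasons: it was exhausted ($\delta_1$); a lower-slack bound became binding ($\delta_2$, lines~\ref{line:d2remove1},\ref{line:d2remove2}); or an upper-violation bound blocked further decrease ($\delta_2$, line~\ref{line:d2remove3}). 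In the binding cases, every index $s\le\tau^*$ (respectively $s\in[\utilde\tau+1,\tilde\tau]$) can increase $d_s$ only by decreasing some $d_r$ whose marginal loss is at least as large. This holds because the list is processed in decreasing slope order, so all earlier-processed segments carry slopes at least as large.
\item \emph{Termination.} The algorithm stops either with $\mathcal A=\varnothing$ or at a feasible point whose best remaining slope is $\le 0$ (line~\ref{line:early_return}). In either case, no remaining single increase has positive gain.
\item \emph{Dual certificate.} Combining these facts, I will construct multipliers $\mu_t$ for the binding partial-sum constraints so that each $d^*_t$ lies in the superdifferential condition $\sum_{j\ge t}\mu_j \in \partial\psi_t(d^*_t)$. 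This is the standard certificate for chain-constrained separable problems, and the slope thresholds $\hat\alpha$ at which blocks were frozen supply the values of the $\mu$'s.
\end{itemize}

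\textbf{Main obstacle.} The hard part is the infeasible branch (line~\ref{line:update_negslope}). There the algorithm continues moving along segments with nonpositive gain purely to repair upper-bound violations, choosing the earliest unfrozen index $\tau'\le\tau$. I must show this forced allocation is made at the least-costly admissible coordinate, i.e.\ that it is the greedy choice for the residual problem once the lower bound $\ell_\tau$ on $D_\tau$ is enforced. My plan is to treat the violations as a lower-bound requirement $D_\tau \ge \ell_\tau$ and argue in two parts:
\begin{enumerate}
\item The first entry of $\mathcal A$ with $\tau'\le\tau$ is the largest marginal gain (least loss) among coordinates able to contribute to $D_\tau$.
\item Feasibility of this move is guaranteed by invariant~\eqref{eq:ubvineq} of Proposition~\ref{thm:GDfeasibility}.
\end{enumerate}
This reduces the branch to ordinary greedy allocation on a chain-constrained problem with lower bounds, and optimality then follows by the same exchange argument.
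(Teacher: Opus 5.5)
Your Step~1 coincides with the paper's proof: the substitution $d=\Delta e^{(0)}-\Delta e$, the concave $\psi_t$, and the recognition of Problem~\eqref{eq:2ndcompact} as a separable resource-allocation problem with nested cumulative constraints. On that problem the paper reads Algorithm~\ref{alg:local_search_algorithm} as the greedy marginal-allocation scheme of \cite{federgruen1986greedy}, with $\delta_1,\delta_2,\delta_3$ matching segment exhaustion and the two bound events. The paper stops at that citation, so your Step~2 is extra verification beyond what the paper supplies: the exchange/KKT certificate, and the separate treatment of line~\ref{line:update_negslope}, where the algorithm takes the highest-slope remaining entry with index $\le\tau$ (as your Claim~1 says), not the earliest index.
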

The proof of Proposition \ref{thm:GDoptimality} can be found in \ref{app:D}.

Beyond optimal value, Algorithm \ref{alg:local_search_algorithm} also identifies the terminal active affine pieces of the functions $f_{t,\omega}$, which is the only second-stage information needed later for the scenario-wise subgradient computation in Section \ref{sect:subgradient}.

%% file: PSM.tex
\subsection{Projected Subgradient Method for the Two-stage Stochastic LP Problem}
\label{Projected Subgradient Method}

The greedy algorithm developed in Section~\ref{Greedy Algorithm} provides an efficient exact oracle for evaluating the second-stage LP problem in \eqref{eq:2S-LP} and computing the corresponding subgradient with respect to the first-stage decisions $p^{\DA}$. Building upon this oracle, we adopt a practical projected subgradient method to compute solutions of the two-stage stochastic LP problem \eqref{eq:2S-LP}. Since the first-stage objective is linear in $p^{\mathrm{DA}}$ and the objective of the decoupled second-stage problem is piecewise linear, the overall objective in \eqref{eq:2S-LP} is convex over the polyhedral set $\mathcal{X}$. The problem is nevertheless nonsmooth because of the imbalance settlement mechanism. Accordingly, we employ a first-order scheme that only requires scenario-wise subgradients of the second-stage value function with respect to $p^{\mathrm{DA}}$ at each iteration.

\subsubsection{Subgradient Computation via the Greedy Oracle} \label{sect:subgradient}

% For a given iterate $p^{\mathrm{DA},(k)}\in\mathcal{X}$, the greedy oracle solves the second-stage LP problem in \eqref{eq:2S-LP} for each scenario $\omega\in\Omega$ and returns both the optimal value and the corresponding active piecewise linear segments of $f_{t,\omega}$. Based on these active segments, a subgradient of \eqref{eq:2S-LP} with respect to $p^{\mathrm{DA}}$ can be obtained as follows.

For a given iterate $p^{\mathrm{DA},(k)}\in\mathcal{X}$ and scenario $\omega \in \Omega$, the greedy oracle solves the second-stage LP problem in \eqref{eq:2S-LP} for each scenario $\omega\in\Omega$ and returns an optimal second-stage solution together with the active piecewise linear segments of $f_{t,\omega}$. Since the feasible region of the second-stage LP $\mathcal{Y}_e$ is independent of $p^{\mathrm{DA}}$, and the greedy oracle returns an exact optimal solution for each scenario, a standard result on convex value functions guarantees that a subgradient of the objective evaluated at the optimal second-stage solution is a valid subgradient of the value function. Consequently, the scenario-wise subgradients computed below yield a valid subgradient of the overall two-stage objective.

We first compute a scenario-wise subgradient of the second-stage objective in \eqref{eq:2S-LP} with respect to the first-stage decision variable $p^{\mathrm{DA}}_{t,s_t(\omega)}$. Since the piecewise linear function $f_{t,\omega}(\Delta e_t, p_t^{\PV,*}(\omega), p_t^\DA)$ in the second-stage LP problem is explicitly structured, its partial derivative with respect to $p^{\mathrm{DA}}_{t,s_t(\omega)}$ can be written as
\begin{equation}
\label{eq:subgrad-slope}
\frac{\partial f_{t,\omega}}{\partial p^{\mathrm{DA}}_{t,s_t(\omega)}} = 
\begin{cases}
\lambda^{\mathrm{DA}}_{t,\omega}+\kappa, & \text{if } p_t^{\mathrm{mis}} < 0 \;\text{(net-shortfall)},\\[4pt]
\lambda^{\mathrm{DA}}_{t,\omega}-\kappa, & \text{if } p_t^{\mathrm{mis}} > 0 \;\text{(net-surplus)}.
\end{cases}
\end{equation}
Note that $f_{t,\omega}(\cdot)$ is not differentiated at $p_t^{\mathrm{mis}} = 0$, and we select the midpoint $\lambda^{\mathrm{DA}}_{t,\omega}$ as a subgradient.

Combining the first-stage objective derivative, $-\lambda^{\mathrm{DA}}_{t,\omega}$, with the selected scenario-wise subgradient of the second-stage LP objective in \eqref{eq:subgrad-slope}, the subgradient of the two-stage stochastic LP problem \eqref{eq:2S-LP} with respect to $p^{\mathrm{DA},(k)}$ is given componentwise by
\begin{equation}
\label{eq:subgrad-full}
g_{t,s}^{(k)} \;=\; \sum_{\omega\in\Omega:\, s_t(\omega)=s} \rho_\omega \,\biggl\{
-\lambda^{\mathrm{DA}}_{t,\omega}
\;+\; \frac{\partial f_{t,\omega}}{\partial p^{\mathrm{DA}}_{t,s}}\,
\biggr\},
\qquad \forall\, t\in\mathcal{T},\; s\in\mathcal{S},
\end{equation}
where the summation is taken over all scenarios $\omega$ whose Markov state at hour $t$ is $s$, thereby implicitly enforcing the nonanticipativity constraints of the first-stage offering decisions.

\subsubsection{Projection onto the Feasible Set $\mathcal{X}$}

Let $\alpha_k>0$ denote the step size at iteration $k$, whose selection rule will be specified in the next subsection. Given the current iterate $p^{\mathrm{DA},(k)}$ and the subgradient $g^{(k)}$, the subgradient update is $\tilde{p}^{\mathrm{DA}} \;:=\; p^{\mathrm{DA},(k)} - \alpha_k\,g^{(k)}$. Since the subgradient update does not necessarily preserve feasibility, $\tilde{p}^{\mathrm{DA}}$ may lie outside the feasible set $\mathcal{X}$. We therefore project it back onto $\mathcal{X}$ in the Euclidean sense, i.e., we seek the feasible point in $\mathcal{X}$ closest to $\tilde{p}^{\mathrm{DA}}$. Formally, the projection operator onto $\mathcal{X}$ is defined as
\begin{equation}
\label{eq:proj-def}
\Pi_{\mathcal{X}}(\tilde{p}^{\mathrm{DA}})
\;:=\; \argmin_{p^{\mathrm{DA}}\in\mathcal{X}} \;\frac{1}{2}\,\|p^{\mathrm{DA}}-\tilde{p}^{\mathrm{DA}}\|_2^2.
\end{equation}

Recall from \eqref{first_stage_FR} that $\mathcal{X}$ consists of (i) box constraints and (ii) monotonicity constraints. The monotonicity constraint couples the offering quantities at different price states within the same time slot, but introduces no coupling across different time slots. Since the objective in \eqref{eq:proj-def} is also separable over $t$, the projection \eqref{eq:proj-def} decomposes into $|\mathcal{T}|$ independent subproblems, one per time slot $t\in\mathcal{T}$. For a single time slot $t$, let $\{s_1,s_2,\ldots,s_N\}$ be the price states ordered so that $\pi^{\mathrm{DA}}_{t,s_1}\le \pi^{\mathrm{DA}}_{t,s_2}\le \cdots \le \pi^{\mathrm{DA}}_{t,s_N}$. Then the projection reduces to a bounded isotonic regression problem: 
\begin{subequations}
\label{eq:isotonic-proj}
\begin{align}
\min \quad &
\sum_{n=1}^{N} \frac{1}{2}\,\bigl(p_{t,s_n}^{\mathrm{DA}}-\tilde{p}_{t,s_n}^{\mathrm{DA}}\bigr)^2 \\
\text{s.t.}\quad
&\ubar{p}^{\mathrm{DA}}\le p_{t,s_n}^{\mathrm{DA}}\le \bar{p}^{\mathrm{DA}},\;\; n=1,\ldots,N. \\
& p_{t,s_1}^{\mathrm{DA}}\,\le\, p_{t,s_2}^{\mathrm{DA}}\,\le\, \cdots \,\le\, p_{t,s_N}^{\mathrm{DA}}
\end{align}
\end{subequations}

Problem \eqref{eq:isotonic-proj} is a bounded isotonic regression problem. It can be solved exactly by a standard bounded-PAVA routine, or equivalently by a generalized PAVA that enforces both monotonicity and box constraints \cite{JSSv032i05}. Intuitively, the method pools adjacent violations as in classical isotonic regression, while each pooled value is projected onto the admissible interval $[\ubar{p}^{\mathrm{DA}},\bar{p}^{\mathrm{DA}}]$. Since the subproblems are independent across time slots, the projection step is naturally parallelizable over $t\in\mathcal{T}$.

% Problem \eqref{eq:isotonic-proj} is a bounded isotonic regression problem and can therefore be solved efficiently by the pool adjacent violators algorithm (PAVA) \cite{JSSv032i05}. The basic idea is to scan the ordered components from $n=1$ to $N$: whenever a pair of adjacent components violates the monotonicity condition, the algorithm merges them into a single ``pool'' and replaces both values by their average, clipped to the box $[\ubar{p}^{\mathrm{DA}},\bar{p}^{\mathrm{DA}}]$. This pooling step is repeated until all monotonicity violations are removed. Since the subproblems are independent across time slots, the projection step is also naturally parallelizable over $t\in\mathcal{T}$.

% As a result, \eqref{eq:isotonic-proj} can be solved exactly in $\mathcal{O}(N)$ time for each time slot, yielding an overall complexity of $\mathcal{O}(N|\mathcal{T}|)$ for the full projection problem. 

\subsubsection{Step Size Selection and Termination}
\label{subsubsection:step-size}

Given the current iterate $p^{\mathrm{DA},(k)}$ and the subgradient $g^{(k)}$ in \eqref{eq:subgrad-full}, the projected subgradient update is $p^{\mathrm{DA},(k+1)}=\Pi_{\mathcal{X}} (p^{\mathrm{DA},(k)}-\alpha_k\,g^{(k)})$. Given each outer iteration already requires the solution of the exact greedy oracle, we seek a step-size rule that adds negligible overhead while adapting to the local scaling of the objective. The Barzilai-Borwein two-point formula provides a lightweight curvature proxy \cite{barzilai_twopoint_1988}. For $k\ge 1$, define $\delta p^{(k-1)} := p^{\mathrm{DA},(k)} - p^{\mathrm{DA},(k-1)},
\delta g^{(k-1)} := g^{(k)} - g^{(k-1)}$. If $\|\delta g^{(k-1)}\|_2>0$, we compute
\begin{equation}
\label{eq:spectral-step}
\alpha_k^{\mathrm{BB}} \;=\; \frac{\langle \delta p^{(k-1)},\, \delta g^{(k-1)} \rangle}
{\langle \delta g^{(k-1)},\, \delta g^{(k-1)} \rangle},
\end{equation}
and set $\alpha_k
=
\min\bigl\{\alpha_{\max},\, \max\{\alpha_{\min},\, \alpha_k^{\mathrm{BB}}\}\bigr\}$. If $\|\delta g^{(k-1)}\|_2=0$, we set $\alpha_k=\alpha_0$, where $\alpha_0\in[\alpha_{\min},\alpha_{\max}]$ is a prescribed initial step size. At the first iteration, we also use $\alpha_0$. Since the objective function $\Phi$ is convex but nonsmooth, we do not impose the monotone backtracking condition in \cite{barzilai_twopoint_1988}. Instead, the clipped BB formula above is used only to generate a safeguarded steplength for the projected subgradient iteration.

The algorithm terminates when the relative change in the objective value satisfies
\begin{equation}
\label{eq:stopping}
\frac{\bigl|\Phi\bigl(p^{\mathrm{DA},(k+1)}\bigr)-\Phi\bigl(p^{\mathrm{DA},(k)}\bigr)\bigr|}
{\max\bigl\{1,\bigl|\Phi \bigl(p^{\mathrm{DA},(k)}\bigr)\bigr|\bigr\}}
\le \varepsilon_{\mathrm{rel}}.
\end{equation}